\setlist[itemize]{label={--}}
\newtheorem{theorem}{Theorem}[section]
\newtheorem{lemma}[theorem]{Lemma}
\newtheorem{remark}[theorem]{Remark}
\newtheorem{example}[theorem]{Example}
\newtheorem{corollary}[theorem]{Corollary}
\newcommand{\RR}{\mathbb{R}}
\newcommand{\SPD}{\mathcal P}
\newcommandx{\abs}[2][1=\@empty]{#1\lvert #2 #1\rvert}
\newcommandx{\norm}[3][1=\@empty,3=\@empty]{#1\lVert #2 #1\rVert_{#3}}
\newcommandx{\ubar}[1]{\underaccent{\bar}{#1}}
\newcommand{\tT}{\mathrm{T}}
\DeclareMathOperator*{\argmin}{arg\,min}
\DeclareMathOperator*{\argmax}{arg\,max}
\DeclareMathOperator{\sgn}{sgn}
\DeclareMathOperator{\Log}{log}
\DeclareMathOperator{\MLog}{Log}
\DeclareMathOperator{\Exp}{exp}
\DeclareMathOperator{\MExp}{Exp}
\DeclareMathOperator{\Diag}{diag}
\DeclareMathOperator{\KL}{KL}
\DeclareMathOperator{\distm}{dist}
\newcommand{\zb}[1]{\ensuremath{\boldsymbol{#1}}}
\begin{document}
\title{Iterative Multiplicative Filters for Data Labeling}
\date{\today}
\author{ 
Ronny Bergmann\footnotemark[1], Jan Henrik Fitschen\footnotemark[1], Johannes Persch\footnotemark[1], \\ and Gabriele Steidl\footnote{Department of Mathematics,
Technische Universität Kaiserslautern, Postfach 3049, 67653 Kaiserslautern, Germany,
\{bergmann, fitschen, persch, steidl\}@mathematik.uni-kl.de.
}
}
\maketitle
\begin{abstract}
	\noindent\small 
	 Based on an idea in~\cite{APSS2016} we propose a new iterative
   multiplicative filtering algorithm for label assignment matrices
   which can be used for the supervised partitioning of data.
   Starting with a row-normalized matrix containing the averaged distances
   between prior features and the observed ones the method assigns in a very
   efficient way labels to the data. 
   We interpret the algorithm as a gradient ascent method with respect
   to a certain function on the product manifold of positive numbers followed
   by a reprojection 	onto a subset of the probability simplex consisting of
   vectors whose components are bounded away from zero by a small constant.
   While such boundedness away from zero is necessary to avoid an arithmetic
   underflow, our convergence results imply that they are also necessary for
   theoretical reasons.
   Numerical examples show that the proposed simple and fast algorithm leads to 
   very good results. 
   In particular we apply the method for the partitioning 
   of manifold-valued images.	
\end{abstract}
\section{Introduction} \label{sec:intro}
Data labeling  is a basic problem which appears in various applications.
In particular it can be used for image partitioning and segmentation, 
which is an important preprocessing step for many state-of-the-art algorithms
in high-level computer vision. 
It addresses the task to assign labels from a finite set
to the image points in a meaningful way. 
Thereby the number of labels $K$ is much smaller than the image dimension $n$.
Fig.~\ref{fig:intro} illustrates a typical labeling result
which leads to the segmentation of a texture image.

\begin{figure}
	\centering
	\begin{subfigure}{0.32\textwidth}
		\centering
		\includegraphics[height=0.98\textwidth]{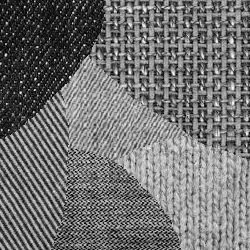}
		\caption[]{Original image}\label{fig:intro:orig}
	\end{subfigure}
	\begin{subfigure}{0.32\textwidth}
		\centering
		\includegraphics[height=0.98\textwidth]{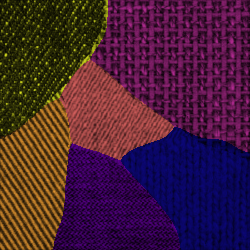}
		\caption[]{Labeling result}\label{fig:intro:res}
	\end{subfigure}
	\caption[]{\label{fig:intro} Labeling of textures based on means and covariance matrices 
	obtained from vectors containing the gray value and the first and second order differences in horizontal and vertical direction.
	Different labels are visualized by different colors in the right image.}
\end{figure}	

As a \emph{first ingredient} for a labeling procedure 
features of the pixels together with a similarity measure for these features is needed.
In this paper we only deal with supervised labeling where prototypical features for each label are available. 
While for natural color images in the RGB color space often
the $\ell_p$-distance, $p\in \{1,2\}$,
on $\mathbb R^3$ is sufficient, this choice is clearly not appropriate when dealing with manifold-valued images
as, e.g., in DT-MRI, where the image pixels are positive definite $3\times 3$ matrices or in Electron Backscattered Diffraction,
where the pixels lie in a quotient manifold of the special orthogonal group.
Also for the texture image in Fig.~\ref{fig:intro} we have to deal with more appropriate features,
actually means and covariance matrices associated to the image pixels as described in the numerical part in Section~\ref{sec:numerics}, paragraph on ,,symmetric positive definite matrices'', second example.
In other words, often the features can be considered to lie on a manifold and 
the distance on the manifold provides a good similarity measure.
We remark that there is a large amount of literature on the collection of appropriate image features
and a detailed description is beyond the scope of this paper.
However, taking just features of single pixels and their distances to  prototypical label features into account
would lead to a high sensibility of the method to errors.
To get robust labeling procedures and to
respect moreover the geometry of the image 
the local or nonlocal neighborhood of the pixels should be involved as a \emph{second ingredient}
into the labeling procedure.
A first idea would be a two step method which applies a neighborhood filtering procedure
directly on the image features followed by a label assignment 
via comparison of distances to the prototypical label features.
The neighborhood filters may be local or nonlocal~\cite{BCM2010,DDT2009,GO07}
and linear or nonlinear~\cite{TM98}. Indeed, such nonlinear methods may also
comprise nonlinear diffusion filter~\cite{BCM2006,We98} or
variational restoration models inspired by the Mumford-Shah functional~\cite{CCZ2013,MS89}.
However, such methods become rather complicated if the features lie for example in a non flat manifold.
Then already the simple local, linear averaging method calls for the computation of Karcher means
which requires itself an iterative procedure. Note that recently nonlocal patch-based methods
were extensively applied for the denoising of manifold-valued images in~\cite{LPS2016}.
In this paper, we will apply the neighborhood filtering not directly on the image features, 
but on the so-called label assignment vectors. 
The label assignment vectors 
$W_i\coloneqq \left(W_{i,k} \right)_{k=1}^K$, $i=1,\ldots,n$,
contain the probabilities $W_{i,k}$ that image pixel $i$
belongs to label $k$. Clearly, since we are dealing with probabilities, we have that $W_i$ 
belongs to the probability simplex
\begin{equation}\label{prob_simplex}
\Delta_K \coloneqq \Bigl\{x \in \mathbb R^K:\sum_{k=1}^K x_k = 1, x \ge 0\Bigr\}.
\end{equation}
Hence, our neighborhood filtering process is independent of the considered feature space. 
The idea to handle neighborhood relations via the label assignment vectors can also be found
in so-called relaxation labeling approaches~\cite{HZ83,Pel97,RHZ1976}. 
However these methods do not use a multiplicative filtering process.

Besides the above mentioned  methods 
there exists a huge number of variational labeling approaches and no single technique works best for all cases.
The models can roughly be divided into continuous and discrete ones.
Recent continuous multilabel models as those in \cite{BYT2011,CCP2012,LS2011} consider a convex functional which consists
of a data term that contains the distances between the pixel features and prototypical features weighted
by the label assignment vectors and a regularization term which enforces certain neighborhood relations
on the label assignment vectors. A frequent choice are isotropic total variation regularizers~\cite{ROF92}
and their relatives as
wavelet-like terms~\cite{HS2013}.
The global minimizers can be found by primal dual first order methods~\cite{BSS2016,CP11}.
In~\cite{CEN2006} it was shown that the relaxed two label method is tight, 
a promising result which is unfortunately no longer true for multilabel tasks~\cite{LLS13}.
Discrete models, in particular relaxed linear programs, are often the method of choice 
in computer vision since they appear to be
more efficient for huge sets of high dimensional data.
Moreover they are in general tighter than continuous models, see exemplary~\cite{Kol06,KZ04}. 
For a comprehensive study of discrete energy minimizing methods also in the context of image segmentation 
we refer to~\cite{Kappes2015} which significantly expands the scope of methods  involved in the earlier
comparative study~\cite{Szeliski2008}.
The above mentioned continuous and discrete models are non-smooth  and convex.
We want to mention that there exist also non convex approaches as, e.g., 
those in~\cite{HH1993,HB1997,Orland1985,WJ2008,YFW2005}
which always have to cope with local minima.

Recently, {\AA}str{\"o}m, Petra, Schmitzer, and Schn{\"o}rr~\cite{APSS2016} suggested an
interesting  supervised geometric approach to the labeling problem, see also \cite{Astroem2016d,Astroem2016a} for the corresponding conference papers. The objective function
to maximize is defined on the open manifold of stochastic matrices with the Fisher-Rao metric and a maximizing algorithm
via the corresponding Riemannian gradient ascent flow is considered.
In the numerical part the authors apply several simplifications, 
in particular lifting maps to cope with the non completeness of the manifold.
Finally this leads to an iterative numerical procedure.
Unlike the continuous Riemannian gradient flow that is shown in~\cite{APSS2016} to converge to edges of the simplex, 
the authors merely showed that the simplified numerical scheme closely approximates this flow, 
but did not prove its convergence. In this paper we discuss the convergence of the resulting numerical scheme.

Since the approach in~\cite{APSS2016} may be hard to read for people not trained in differential geometry 
we show in Section~\ref{sec:mf_mod} how their final iteration scheme can be rewritten 
such that we arrive at an easily understandable algorithm. 
A slight modification of this algorithm leads to our (even simpler) algorithm  reported in the next section.

We start with this algorithm  since we want to introduce it from a user-friendly point of view 
as a multiplicative filtering process of label assignment vectors.
We initialize our iterations with a label assignment matrix containing label assignment vectors of
averaged distances between the prior 
features and the observed ones. Matrices of this kind are common kernel matrices in nonlinear filter~\cite{BCM2010,TM98}, patch- and graph Laplacian based
methods, see, e.g.,~\cite{BN03,EA06}.
In particular the data and their priors may lie in any metric space, which makes the method highly flexible.
Then these label assignment vectors are iterated in a multiplicative way, where we have to take care that the components stay within the probability simplex and
a (small) fixed distance $\varepsilon > 0$ away from zero.
We analyze the convergence of the algorithm if such an $\varepsilon$ is not fixed.
In exact arithmetic it appears that in most relevant cases the algorithm would converge to a trivial
label assignment matrix. This is not the case if we demand that the components are bounded away from zero
by a small distance. Such condition is used in the numerical part anyway to avoid an arithmetic underflow.
However, the analysis shows that there is also a theoretical reason for bounding the components away from zero.
Indeed, in all numerical examples we observed convergence of the algorithm to the vertices of
the $\varepsilon$ probability simplex 
$\Delta_{K,\varepsilon} \coloneqq \{x \in \mathbb R^K:\sum_{k=1}^K x_k = 1, x_i \ge \varepsilon, i=1,\ldots,K\}$.
We reinterpret our algorithm as gradient ascent step of a certain function on the product manifold of
positive numbers followed by a reprojection onto the $\varepsilon$ probability simplex.
The projection is done with respect to the Kullback-Leibler distance.
We provide various numerical examples.

The outline of this paper is as follows: In Section~\ref{sec:mf} we introduce and discuss
our iterative multiplicative filtering algorithm:
the simple algorithm is presented in Subsection~\ref{subsec:basic_alg}, followed by its convergence analysis
in the case that the components  of the label assignment vectors are not bounded away from zero 
in Subsection~\ref{subsec:convergence}.
We interpret the algorithm as a gradient ascent step of a certain function followed by a Kullback-Leibler projection onto
the $\varepsilon$ probability simplex in Subsection~\ref{subsec:gradient}.
The relation to paper~\cite{APSS2016} together is detailed
in Section~\ref{sec:mf_mod}. 
Numerical results demonstrate the very good performance of our algorithm in Section~\ref{sec:numerics}.
In particular we apply the method for the partitioning  of 
manifold-valued images. 
The paper finishes with conclusions in Section~\ref{sec:conclusions}.

\section{Multiplicative Filtering of Label Assignment Matrices} \label{sec:mf}
\subsection{Basic Algorithm} \label{subsec:basic_alg}
In this subsection we introduce the supervised labeling algorithm. 
As outlined in the introduction two basic
ingredients are needed, namely i) feature vectors (prototypical ones for the labels of each pixel)
together with a distance measure between them, and
ii) neighborhood relations between pixels usually given by certain weights.

Let $I_n$ denote the $n\times n$ identity matrix,
$\zb 1_{n,m}$ the $n \times m$ matrix with all components $1$, and
$\zb e_k \in \mathbb R^K$ the $k$-th unit vector in $\mathbb R^K$.
By $\lVert x\rVert_1 \coloneqq |x_1| + \ldots + |x_K|$ we denote the $\ell_1$ norm  and by
$\langle x,y \rangle \coloneqq x_1y_1 + \ldots+x_Ky_K$ the inner product of vectors $x,y \in \mathbb R^K$.
Likewise, if $X,Y \in \mathbb R^{n,K}$ are matrices, we use
$
\langle X,Y\rangle = \sum_{j=1}^n \sum_{k=1}^K X_{j,k} Y_{j,k}
$.
Let $({\mathcal M},\distm)$ be a metric space.
In our applications ${\mathcal M}$ will be $\mathbb R^d$ or a non-flat Riemannian manifold.

We are interested in partitioning $n$ features
\[
	f_i \in {\mathcal M}, \quad i=1,\ldots,n,
\]
into $K \ll n$ labels, where we suppose that we are given $K$ prior features 
\[
	f_k^* \in {\mathcal M}, \quad k=1,\ldots,K.
\]
In other words, we consider supervised labeling.
Then
\begin{equation} \label{dist}
{\mathcal D} \coloneqq (d_{i,k})_{i=1,k=1}^{n,K} 
= 
\begin{pmatrix} {\mathcal D}_1 \, \ldots \, {\mathcal D}_n \end{pmatrix}^\tT, \quad d_{i,k} \coloneqq \distm(f_i,f_k^*),
\end{equation}
is the distance matrix of the features to the priors.
Throughout this paper exponentials and logarithms of matrices are meant
componentwise. Further, $x \circ y$ denotes the componentwise product of
matrices.
To each pixel $i \in \{1,\ldots,n\}$ we assign a neighborhood ${\mathcal N}_\alpha(i) \subseteq \{1,\ldots,n\}$
and set
\begin{equation} \label{eq:initialization}
A_{i} \coloneqq \Exp \Big( - \sum\limits_{j \in {\mathcal N}_\alpha (i)} 
\alpha_{i,j} {\mathcal D}_j \Big)/ \Big\| 
\Exp \Big( - \sum\limits_{j \in {\mathcal N}_\alpha(i)} \alpha_{i,j} {\mathcal D}_j \Big) \Big\|_1 \in \mathbb R_{>0}^K
\end{equation}
and
\[
A \coloneqq (A_1 \, \ldots \, A_n)^ \tT \in \mathbb R_{\ge 0}^{n,K},
\]
where $\alpha_{i,j} \in (0,1)$ with
$\sum_{j \in {\mathcal N}_\alpha(i)} \alpha_{i,j} = 1$.
We can extend the definition of the weights $\alpha_{i,j}$ to all $j=1,\ldots,n$ 
by setting $\alpha_{i,j} = 0$ if $j \not \in {\mathcal N}_\alpha(i)$.

We will work on \emph{label assignment matrices} with non-negative entries 
\[
W \coloneqq \begin{pmatrix} W_1 \, \ldots \, W_n \end{pmatrix}^\tT \in \mathbb R_{\ge 0}^{n,K},
\]
whose rows sum up to 1.
In other words, $W$ consists of \emph{label assignment vectors} $W_i \in \Delta_K$, where $\Delta_K$ denotes the probability simplex~\eqref{prob_simplex}.
In particular, $W_i^{(0)} \coloneqq A_i \in \Delta_K$ will serve as initialization for our algorithm.
Then we apply a multiplicative neighborhood averaging to the columns of $W$, i.e., separately for each label $k \in \{1,\ldots,K\}$.
To have full flexibility, we may consider other neighborhoods ${\mathcal N}_\rho$ and weights 
$\rho_{i,j} > 0$ with 
$\sum_{j \in {\mathcal N}_\rho (i)} \rho_{i,j}  = 1$, $i=1,\ldots,n$
for this process. 
We assume that the weights in Algorithm~\ref{alg:mult} fulfill 
\[
  \rho_{i,j} = \rho_{j,i},
\]
i.e., mutual neighbors are weighted in the same way
and extend their definition to all indices in $\{1,\ldots,n\}$
by setting $\rho_{i,j} \coloneqq 0$ if $j \not \in {\mathcal N}_\rho(i)$.
In summary we propose the Algorithm~\ref{alg:mult}:

%
\begin{algorithm}[htp]
	\caption{Multiplicative Filtering} 
	\label{alg:mult}
	\begin{algorithmic}
		\State \textbf{Input:}
		$W^{(0)} \coloneqq A$, 
		$\rho_{i,j} \in [0,1]$ with 
		$\sum_{j \in {\mathcal N} (i)} 	 \rho_{i,j}  = 1$, $i=1,\ldots,n$,
		$\varepsilon \coloneqq 10^{-10}$
		\State $r = 0$;
		\Repeat
		\For{$i=1,\ldots,n$}
		\State 
		1. $
		U_i^{(r+1)}
		= W_i^{(r)} \circ \prod\limits_{j \in {\mathcal N}_\rho(i)} (W_j^{(r)} )^{\rho_{i,j}}
		$
		\\
		
		\State
		2. $
		V_i^{(r+1)}
		= U_i^{(r+1)}/\| U_i^{(r+1)} \|_1
		$
		\\
		
		\State
		3. Set $W_i^{(r+1)}\coloneqq V_i^{(r+1)}$ and proceed
		\State 
		${\cal I} = \{k: W_{i,k}^{(r+1)} \le \varepsilon \}$
		\While{
		$\exists k \in \{1,\ldots,K\} : W_{i,k}^{(r+1)} < \varepsilon$}
			\State
			${\cal I} = {\cal I} \cup \argmin_{k=1,\ldots,K} W_{i,k}^{(r+1)}$
			\\
			
			\State
			$
			\tau_{{\cal I}} = \frac{ 1-|{\cal I}| \epsilon} { 1 - \sum_{k \in {\cal I}} W_{i,k}^{(r+1)}}
			$
			\\
			
			\State
			$
			W_{i,k}^{(r+1)} = 
			\left\{
			\begin{array}{ll}
			\varepsilon & k \in {\cal I}
			\\
			 \tau_{{\cal I}} \; 
			 W_{i,k}^{(r+1)} & \text{otherwise}
			 \end{array}
			 \right.
			 $
			\EndWhile	
		
		\EndFor
		\State $r\rightarrow r+1$;		
		\Until a stopping criterion is reached
	\end{algorithmic}
\end{algorithm}
%

When the algorithm stops with $W_i^{(R)}$, we assign the label 
\begin{equation} \label{ass_final}
\argmax_{k \in \{1,\ldots,K\}}  W_{i,k}^{(R)}
\end{equation}
to the $i$-th pixel.
Let us comment the steps of the algorithm:
\\
\paragraph{Step 1.}
Here $\prod\limits_{j \in {\mathcal N}_\rho(i)} (W_j) ^{\rho_{i,j}}$ 
is  the  weighted geometric mean of the label assignment vectors
$W_j$ in the neighborhood of $i$. 
In particular we have for $\rho_{i,j} \coloneqq 1/|{\mathcal N}_\rho(i)|$ that
\[
\prod\limits_{j \in {\mathcal N}_\rho(i)} (W_{j,k} )^{\rho_{i,j}} = \Bigl( \prod\limits_{j \in {\mathcal N}_\rho(i)} W_{j,k} \Bigr) ^\frac{1}{|{\mathcal N}_\rho(i)|},
\quad k =1,\ldots,K.
\]
Taking the logarithm we obtain
\[
\log U^{(r)} = \log W^{(r)} + P \log W^{(r)}
\]
with the weight matrix
\begin{equation} \label{P}
 P \coloneqq ( \rho_{i,j} )_{i,j=1}^n.
\end{equation}
Hence Step 1 can be rewritten as
\begin{equation} \label{step1_other}
 U^{(r+1)} =  W^{(r)} \circ \exp\Bigl( P \log W^{(r)} \Bigr).
\end{equation}
\paragraph{Step 2.} Here we ensure that the rows of $V^{(r+1)}$ lie in the probability simplex $\Delta_K$.
\\
\paragraph{Step 3.} This step basically avoids that the entries of $W^{(r)}$ become too small (actually smaller than $\varepsilon = 10^{-10}$).
Very small values do also not appear if we set
\begin{equation} \label{xy}
W_i^{(r+1)} \coloneqq V_i^{(r+1)}+\zb{1}_K\Bigl(\varepsilon-\min_k\{V_{i,k}^{(r+1)}\}\Bigr)
\end{equation}
followed by a normalization.
Indeed, in numerical tests this update instead of Step 3 works  similar.
We have chosen Step 3 since together with Step 2 it has a nice interpretation as
Kullback-Leibler projection onto the part of the probability simplex with entries entries larger or equal to $\varepsilon$.
However, we will see in the next subsection that Step 3 is not just numerical cosmetics, but in general absolutely
necessary to avoid that the iterates $W_i^{(r)}$ converge to the same vertex of the simplex for all $i=1,\ldots,n$
which would result in a trivial labeling.

By Step 1, the weight matrix $P$ plays a crucial role. By definition this matrix is
symmetric and stochastic, i.e., it has nonnegative entries and $P \zb 1_n = \zb 1_n$.
We finish this subsection by collecting some well-known properties of symmetric stochastic matrices, see, e.g.,~\cite{Fro1912,HJ1986,Per1907,W1950}.
%
\begin{lemma} \label{lem:stochastic}
Let $P \in \mathbb R_{\ge 0}^{n,n}$ be a symmetric stochastic matrix.
Then the following holds true:
\begin{enumerate}[label={\normalfont{\roman*)}}]
 \item
 The matrix $P$ has an eigenvalue decomposition of the form 
  \begin{equation}\label{decomp}
P = Q \Lambda Q^\tT, \qquad \Lambda \coloneqq \Diag (\lambda_1, \ldots, \lambda_n),
\end{equation}
where $\lambda_1 = 1$, $|\lambda_i | \le 1$, $i=2,\ldots,n$ and
\[
Q \coloneqq \begin{pmatrix} q_1 \,  \ldots \, q_n \end{pmatrix}^\tT
\]
is an orthogonal matrix with first column $(q_{j,1})_{j=1}^n = \zb 1_n/\sqrt{n}$.
\item If $P$ has positive diagonal entries, then $-1$ is not an eigenvalue.
\item If $P$ is irreducible, i.e., the associated matrix graph is connected, then $1$ is a single eigenvalue.
\item If $P$ has positive diagonal entries and $\lambda_1 = 1$ is a single eigenvalue, then $P^r$ converges as $r \rightarrow \infty$
to the constant matrix $\frac{1}{n} \zb 1_{n,n}$.
\end{enumerate}
\end{lemma}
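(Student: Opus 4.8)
The plan is to handle the four assertions in order, the common engine being the symmetry of $P$ together with the pair of quadratic-form identities
\begin{equation*}
x^\tT (I_n \pm P)\, x = \tfrac12 \sum_{i,j=1}^n \rho_{i,j}\,(x_i \pm x_j)^2, \qquad x \in \mathbb R^n,
\end{equation*}
which I would obtain by expanding the squares and using $\rho_{i,j}=\rho_{j,i}$ together with the unit row (hence also column) sums of $P$. Both right-hand sides are manifestly nonnegative, which already does most of the work.

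For part i) I would invoke the spectral theorem for the real symmetric matrix $P$ to get an orthonormal eigenbasis, i.e.\ $P = Q\Lambda Q^\tT$ with $Q$ orthogonal and $\Lambda$ real diagonal. Stochasticity $P\zb 1_n = \zb 1_n$ exhibits $1$ as an eigenvalue with normalized eigenvector $\zb 1_n/\sqrt n$, which I place as the first column of $Q$ and call $\lambda_1 = 1$. Applying the two identities to a unit eigenvector for $\lambda_i$ gives $(1\pm\lambda_i)\ge 0$, hence $|\lambda_i|\le 1$ (alternatively one may simply cite Gershgorin's theorem).

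For part ii) I would use the $+$ identity: an eigenvector $x\neq 0$ for $-1$ would satisfy $x^\tT(I_n+P)x = 0$, forcing every term $\rho_{i,j}(x_i+x_j)^2$ to vanish; the diagonal terms $\rho_{i,i}\,(2x_i)^2$ then give $x_i=0$ for all $i$ because $\rho_{i,i}>0$, a contradiction. Part iii) is the mirror argument with the $-$ identity: an eigenvector $x$ for $1$ satisfies $x^\tT(I_n-P)x=0$, so $x_i=x_j$ whenever $\rho_{i,j}>0$; connectedness of the matrix graph then propagates this equality to all indices, so $x\in\vspan\{\zb 1_n\}$ and $1$ is a simple eigenvalue.

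Finally, for part iv) I would combine ii) with the hypothesis that $\lambda_1=1$ is simple to conclude $-1<\lambda_i<1$ for every $i\ge 2$, so that $\lambda_1^r=1$ while $\lambda_i^r\to 0$. Passing to the limit in $P^r = Q\Lambda^r Q^\tT$ leaves only the rank-one contribution of the first eigenvector, namely $\tfrac1n\zb 1_n\zb 1_n^\tT = \tfrac1n\zb 1_{n,n}$. I expect the only genuinely delicate step to be part iii), where irreducibility must be turned into a chain of equalities $x_i=x_j$ linking any two indices; the remaining parts are routine once the two identities are in hand.
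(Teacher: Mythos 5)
Your proof is correct. Note, however, that the paper itself offers no proof of this lemma at all: it is stated as a collection of well-known facts with pointers to the classical literature (Perron, Frobenius, Horn--Johnson), so the comparison is between your self-contained argument and an appeal to standard Perron--Frobenius theory. Your route via the two quadratic-form identities
\begin{equation}
x^\tT (I_n \pm P)\, x = \tfrac12 \sum_{i,j=1}^n \rho_{i,j}\,(x_i \pm x_j)^2
\end{equation}
is a clean and elementary substitute: the identities are verified directly from symmetry and the unit row (hence column) sums, part i) then follows from the spectral theorem plus evaluating these forms on unit eigenvectors (or Gershgorin), part ii) from the vanishing of the diagonal terms $\rho_{i,i}(2x_i)^2$, part iii) from propagating $x_i = x_j$ along edges of the connected matrix graph, and part iv) from $|\lambda_i| < 1$ for $i \ge 2$ together with $P^r = Q\Lambda^r Q^\tT \to q_1 q_1^\tT = \tfrac1n \zb 1_{n,n}$. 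What your approach buys is a fully self-contained, purely linear-algebraic proof that avoids invoking Perron--Frobenius theory for nonnegative matrices; what the paper's citation-only treatment buys is brevity and the implicit availability of the stronger general theory (e.g.\ for nonsymmetric stochastic matrices, where your symmetry-based identities would not apply). The one step you flag as delicate, turning irreducibility into a chain of equalities, is indeed exactly where the graph-connectedness hypothesis is consumed, and your handling of it is correct.
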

%
%
\subsection{Convergence Analysis for \texorpdfstring{$\varepsilon = 0$}{ε}} \label{subsec:convergence}
%
In this subsection, we analyze the convergence of Algorithm~\ref{alg:mult} without Step 3, i.e.,
we use the same initialization as in the algorithm, but
compute just
\begin{align}
1. & \quad U_i^{(r+1)} =
		 W_i^{(r)} \circ \prod\limits_{j \in {\mathcal N}_\rho(i)} (W_j^{(r)} )^{\rho_{i,j}},
\\
2.  &	\quad	W_i^{(r+1)}
		= U_i^{(r+1)}/\| U_i^{(r+1)} \|_1.  \label{alg_triv}
\end{align}
In other words we consider the case $\varepsilon = 0$. 
We will see that the third step is indeed an essential.

We start with the following observation.

\begin{lemma}
 The $r$-the iterate in \eqref{alg_triv} is given by
 \begin{equation} \label{iter_0}
 W^{(r)} = \Biggl( \frac{\exp w_1^{(r)}}{\| \exp w_1^{(r)} \|_1}, \ldots,  \frac{\exp w_n^{(r)}}{\| \exp w_n^{(r)} \|_1}\Biggr)^\tT, 
 \end{equation}
with
\[
w^{(r)} = \big( w_1^{(r)}\, \ldots \, w_n^{(r)} \big)^\tT \coloneqq (I_n + P)^r  a, \quad a \coloneqq \log A
\] 
and the weight matrix $P$ in \eqref{P}.
\end{lemma}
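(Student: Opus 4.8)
The plan is to argue by induction on $r$, using the logarithmic form of Step~1 established in~\eqref{step1_other} together with the observation that the row normalization in Step~2 is ``invisible'' once the iterates are tracked in logarithmic coordinates up to an additive constant per row. Concretely, taking the componentwise logarithm in~\eqref{step1_other} turns Step~1 of the scheme~\eqref{alg_triv} into the linear recursion $\log U^{(r+1)} = (I_n + P)\log W^{(r)}$, which is the mechanism driving the whole iteration; everything else is the harmless effect of normalizing each row.

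For the base case $r = 0$ I would note that $W^{(0)} = A$ has rows $A_i \in \Delta_K$, so $\| \exp w_i^{(0)} \|_1 = \| A_i \|_1 = 1$ and hence $\exp w_i^{(0)} / \| \exp w_i^{(0)} \|_1 = A_i = W_i^{(0)}$, giving~\eqref{iter_0} with $w^{(0)} = a = \log A$. For the inductive step, assuming~\eqref{iter_0} for $r$, taking logarithms row by row yields
\[
 \log W^{(r)} = w^{(r)} - c^{(r)} \zb 1_K^\tT, \qquad c_i^{(r)} \coloneqq \log \| \exp w_i^{(r)} \|_1,
\]
i.e. $\log W^{(r)}$ equals the matrix $w^{(r)} = (I_n + P)^r a$ (whose rows are the $w_i^{(r)}$) up to the matrix $c^{(r)} \zb 1_K^\tT$, which is constant along each row. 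Substituting this into the logarithmic Step~1 and using $(I_n + P) w^{(r)} = (I_n + P)^{r+1} a = w^{(r+1)}$ gives
\[
 \log U^{(r+1)} = (I_n + P) w^{(r)} - (I_n + P) c^{(r)} \zb 1_K^\tT = w^{(r+1)} - \tilde c^{(r+1)} \zb 1_K^\tT,
\]
with $\tilde c^{(r+1)} \coloneqq (I_n + P) c^{(r)} \in \mathbb R^n$.

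The crucial point is that left multiplication by $(I_n + P)$ maps any matrix of the form $c \zb 1_K^\tT$ to $\bigl((I_n + P) c\bigr) \zb 1_K^\tT$, which is again constant along each row. Consequently each row of $U^{(r+1)}$ is $\exp w_i^{(r+1)}$ multiplied by the positive scalar $\exp(-\tilde c_i^{(r+1)})$, and this scalar cancels under the normalization of Step~2, so that $W_i^{(r+1)} = \exp w_i^{(r+1)} / \| \exp w_i^{(r+1)} \|_1$, closing the induction. I do not expect a genuine obstacle here: the only step requiring care is the bookkeeping of the per-row normalization constants and the verification that the ``constant-along-rows'' structure $c \zb 1_K^\tT$ survives the map $(I_n + P)$, which is precisely what lets Step~2 commute with the linear recursion up to an irrelevant row scaling. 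Once this is recognized the stated formula follows immediately.
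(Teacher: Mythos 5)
Your proof is correct and follows essentially the same route as the paper: linearize Step~1 by taking logarithms to obtain the recursion $w^{(r+1)}=(I_n+P)w^{(r)}$ and argue that the row normalization is irrelevant. The paper simply asserts that the normalization can be deferred to the final step, whereas your induction with the row-constant correction terms $c^{(r)}\zb 1_K^\tT$ actually supplies the justification for that assertion, so if anything your write-up is slightly more complete.
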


\begin{proof}
First note that we obtain the same iterates $W_i^{(r)}$ 
if we skip the normalization Step 2 in the intermediate
iterations and normalize only in step $r$.
Therefore we consider the sequence with the same starting point $\tilde W^{(0)} \coloneqq W^{(0)}$ and iterations
\[
		\tilde W_i^{(r+1)}
		\coloneqq \tilde W_i^{(r)} \circ \prod_{j \in {\mathcal N}(i)} \bigl(\tilde W_j^{(r)} \bigr)^{\rho_{i,j}}.
\]
Taking the logarithm and setting
$
w_i^{(r)} \coloneqq \Log \tilde W_i^{(r)}
$
the iteration becomes
\begin{equation}\label{eq:logiter}
w_i^{(r+1)} = w_i^{(r)} + \sum_{j \in {\mathcal N}(i)} \rho_{i,j} w_j^{(r)} ,
\end{equation}
where $w^{(0)} = \log  W^{(0)} = a$.
With $w^{(r)} \coloneqq \big( w_1^{(r)}\, \ldots \, w_n^{(r)} \big)^\tT$ this can be rewritten as
\begin{align}
w^{(r+1)} &= (I_n + P) w^{(r)}\\
&= (I_n + P)^{r+1} w^{(0)}. \label{one}
\end{align}
Then $\tilde W^{(r)} = \exp w^{(r)}$ and row normalization yields the assertion.
\end{proof}

The following remark gives a first intuition about the convergence behavior of $W^{(r)}$.

\begin{remark} \label{rem:conv}
 By \eqref{one} we have 
 \[
 w^{(r)} = 2^r \biggl( \frac12(I_n + P) \biggr)^r w^{(0)} = 2^r v^{(r)}.
 \]
 From the definition of $P$ we know that $\frac12(I_n + P)$ is a symmetric stochastic matrix with positive diagonal entries.
 Assuming that $1$ is a single eigenvalue of this matrix, we conclude by Lemma~\ref{lem:stochastic} 
 that $v^{(r)}$ converges to the matrix
 $\frac{1}{n} \zb 1_n \zb 1_n^\tT w^{(0)} $ with constant columns as $r\rightarrow \infty$.
  However, this does in general not imply that the columns of $W^{(r)}$ also converge to a constant vector for $r\rightarrow \infty$
  as the following example shows:
  We have
  \[
  v^{(r)} \coloneqq 
  \begin{pmatrix}
      \frac12 + \frac{1}{2^r}
      &
      \frac12 + (\frac{2}{3})^r \\[0.5ex]
      \frac12 + (\frac{2}{3})^r
      &
      \frac12 + \frac{1}{2^r}
      \end{pmatrix}
      \quad \rightarrow \quad 
      \begin{pmatrix}
    \frac12&\frac12
    \\[0.5ex]
    \frac12 &\frac12
    \end{pmatrix} \eqqcolon v \quad {\textrm as} \; r\rightarrow \infty.
  \]
  Multiplying with $2^r$, taking the exponential and normalizing the rows leads in the first column of $v^{(r)}$ to
    \begin{align}
      \frac{ \exp 
      \Biggl( 2^r  
      \begin{pmatrix}
       \frac12 + \frac{1}{2^r}
       \\
       \frac12 + (\frac{2}{3})^r
       \end{pmatrix} 
       \Biggr) 
       }
       { \exp \bigl(
         2^r(\frac12 + \frac{1}{2^r}) \bigr)
         +
         \exp\bigl(2^r(\frac12 + (\frac{2}{3})^r
         )\bigr)} 
     &= \frac{\exp \Biggl(  \begin{pmatrix}
            2^{r-1} + 1
            \\
            2^{r-1} + (\frac{4}{3})^r
            \end{pmatrix} \Biggr) }{ \exp\big(
              2^{r-1} + 1\big)
              +
              \exp\big(2^{r-1} + (\frac{4}{3})^r\big)} \\
      & =\frac{\exp
      \Biggl(  \begin{pmatrix}
           1
           \\
       (\frac{4}{3})^r
         \end{pmatrix} \Biggr) }{ \exp
         \big(
       1\big)
         +
       \exp\big((\frac{4}{3})^r\big)}
           \end{align}
           so that
 \[
   \lim_{r \rightarrow \infty} 
   \frac{\exp\bigl(2^r v^{(r)}\bigr)}{\exp\bigl(2^r v^{(r)}\bigr)_{1,1} + \exp\bigl(2^r v^{(r)}\bigr)_{1,2}}
       = \begin{pmatrix}
       0 &1\\ 
       1 &0
       \end{pmatrix}.
 \]
\end{remark}
The general convergence result is given next.
%
\begin{theorem} \label{thm:conv}
\begin{enumerate}[label={\normalfont{\roman*)}}]
\item
The sequence $\{ W_i^{(r)} \}_r$ generated by~\eqref{alg_triv} converges.
\item
Let $P$ have the eigenvalue decomposition \eqref{decomp}, where $\Lambda$ contains the eigenvalues in descending order.
Assume that there are $M$ pairwise distinct eigenvalues $\lambda_{n_s}$ with multiplicity  $\kappa_{n_s}$, $s=1,\ldots,M$.
Let $\hat s$ be the largest index such that $\lambda_{n_{\hat s}} > 0$. Set 
\[
 \hat a \coloneqq Q^\tT \Log A , \qquad \hat a=(\hat a_1 \, \ldots \, \hat a_K) \in  \mathbb R^{n,K}.
\]
 Then, for every $i \in \{1,\ldots,n\}$, the sequence $\{ W_i^{(r)} \}_r$ 
 converges to a unit vector if and only if $A$ fulfills the following property
 \begin{itemize}
\item[{\normalfont{(PI):}}] If for an $i \in \{1,\ldots,n\}$, there exist $k,l\in \{1,\ldots,K\}$, $l \not = k$
 such that
 \[
 c_{l,k}(s) \coloneqq \sum_{j= n_{s}}^{n_{s} + \kappa_{n_s} -1} q_{i,j} (\hat a_{j, l}- \hat a_{j,k} ) = 0 \qquad \text{for all } s=1,\ldots, \hat s, 
 \]
 then there exists $\tilde l \in \{1,\ldots,K\}$ and  $\tilde s = \tilde s(k,\tilde l) \in \{1,\ldots,\hat s\}$ so that
\begin{equation}
 c_{\tilde l,k}(\tilde s) > 0  \quad \text{and} \quad
 c_{\tilde l,k} (s) = 0 \qquad \text{for all } s = 1,\ldots,\tilde s-1.
 \end{equation}
 \end{itemize}
 \end{enumerate}
\end{theorem}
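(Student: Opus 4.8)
The plan is to reduce everything to the asymptotics of the pairwise differences $w_{i,l}^{(r)} - w_{i,k}^{(r)}$, since by the previous lemma the $k$-th component of the row-normalized iterate is
\[
W_{i,k}^{(r)} = \frac{\exp w_{i,k}^{(r)}}{\sum_{l=1}^K \exp w_{i,l}^{(r)}} = \Bigl( \sum_{l=1}^K \exp\bigl( w_{i,l}^{(r)} - w_{i,k}^{(r)} \bigr) \Bigr)^{-1},
\]
so that convergence of $W_i^{(r)}$ is governed entirely by the limits of these differences. First I would insert the eigenvalue decomposition \eqref{decomp} into $w^{(r)} = (I_n+P)^r a$ to obtain $w^{(r)} = Q (I_n + \Lambda)^r \hat a$ with $\hat a = Q^\tT \log A$, hence componentwise $w_{i,k}^{(r)} = \sum_{j=1}^n q_{i,j} (1+\lambda_j)^r \hat a_{j,k}$. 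Grouping the indices $j$ belonging to the same eigenvalue $\lambda_{n_s}$ and using the quantities $c_{l,k}(s)$ from (PI) yields the clean expression
\[
w_{i,l}^{(r)} - w_{i,k}^{(r)} = \sum_{s=1}^M (1+\lambda_{n_s})^r \, c_{l,k}(s).
\]

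The key structural observation I would isolate next is that $c_{l,k}(s) = \hat b_l(s) - \hat b_k(s)$, where $\hat b_k(s) \coloneqq \sum_{j=n_s}^{n_s + \kappa_{n_s}-1} q_{i,j}\hat a_{j,k}$; thus each label $k$ carries a vector $\hat b_k = (\hat b_k(1), \ldots, \hat b_k(\hat s)) \in \mathbb R^{\hat s}$ built from the growing modes. Since $\lambda_{n_1} = 1 > \lambda_{n_2} > \cdots$ and $\hat s$ is chosen so that $1+\lambda_{n_s} > 1$ exactly for $s \le \hat s$, while $1 + \lambda_{n_s} \in [0,1]$ for $s > \hat s$ (equal to $1$ only for a possible zero eigenvalue, strictly less otherwise), the large-$r$ behaviour of the sum is dictated by the first index $s \le \hat s$ with $c_{l,k}(s) \neq 0$. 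This gives the trichotomy: $w_{i,l}^{(r)} - w_{i,k}^{(r)}$ tends to $-\infty$, converges to a finite limit, or tends to $+\infty$, according to whether $\hat b_k >_{\mathrm{lex}} \hat b_l$, $\hat b_k = \hat b_l$, or $\hat b_k <_{\mathrm{lex}} \hat b_l$ in the lexicographic order. In the tie case $\hat b_k = \hat b_l$ only the non-growing modes $s > \hat s$ survive, so the difference converges to the finite value contributed by a zero eigenvalue (and to $0$ if none is present).

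For part i) I would let $k^\ast$ realize the lexicographic maximum of $\{\hat b_k\}_{k=1}^K$ (which exists since the lexicographic order is total) and factor $\exp w_{i,k^\ast}^{(r)}$ out of numerator and denominator. By the trichotomy every term $\exp(w_{i,l}^{(r)} - w_{i,k^\ast}^{(r)})$ converges — to $0$ when $\hat b_{k^\ast} >_{\mathrm{lex}} \hat b_l$ and to a positive finite limit when $\hat b_l = \hat b_{k^\ast}$ — so the denominator converges to a strictly positive limit (the term $l = k^\ast$ alone contributes $1$) and each $W_{i,k}^{(r)}$ converges. The limit is supported on the tie class $T \coloneqq \{l : \hat b_l = \hat b_{k^\ast}\}$, which proves i) and simultaneously shows that the limit is a unit vector if and only if $|T| = 1$, i.e.\ the lexicographic maximum is attained by a single label.

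It remains to match this criterion with property (PI), which is the only genuinely delicate step. Reading (PI) through the identity $c_{l,k} = \hat b_l - \hat b_k$, the hypothesis ``$c_{l,k}(s) = 0$ for all $s \le \hat s$'' says precisely that $k$ is tied, $\hat b_k = \hat b_l$, while the conclusion ``there is $\tilde l$ with $c_{\tilde l,k}(\tilde s) > 0$ and $c_{\tilde l, k}(s) = 0$ for $s < \tilde s$'' says exactly that some $\tilde l$ satisfies $\hat b_{\tilde l} >_{\mathrm{lex}} \hat b_k$. Thus (PI) asserts that any label involved in a tie fails to be the lexicographic maximum; by contraposition this is equivalent to the lexicographic maximum being unique, i.e.\ $|T| = 1$. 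I expect the main obstacle to lie in the careful book-keeping of this logical equivalence together with the eigenvalue grouping: once the reduction $c_{l,k} = \hat b_l - \hat b_k$ and the lexicographic trichotomy are in place, both the convergence in i) and the ``if and only if'' in ii) follow, and the only remaining care is the treatment of the non-growing modes (in particular the harmless cases $1 + \lambda_{n_s} \in \{0\} \cup (0,1)$) when identifying the finite limits.
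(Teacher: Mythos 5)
Your proposal is correct and follows essentially the same route as the paper: diagonalize $I_n+P$ to get $w^{(r)}=Q(I_n+\Lambda)^r\hat a$, write $w_{i,l}^{(r)}-w_{i,k}^{(r)}=\sum_{s}\mu_{n_s}^{r}\,c_{l,k}(s)$ with $\mu_{n_s}=1+\lambda_{n_s}$, and read off the limit of each row-normalized component from the first nonvanishing $c_{l,k}(s)$ among the growing modes $s\le\hat s$. Your lexicographic bookkeeping via the vectors $\hat b_k$ (and the tie class $T$) is only a tidier reorganization of the paper's per-component case analysis of \textup{(PI)}, not a genuinely different argument.
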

%
\begin{proof}
i)
By~\eqref{one} and \eqref{decomp} we obtain
\begin{align}
w^{(r)} &= (I_n + P)^r w^{(0)} = Q (I_n + \Lambda)^r \hat a 
= Q \, D^r \,\hat a, \label{dec_1}
\end{align} 
where $D \coloneqq \Diag( \mu_1, \ldots,\mu_n)$ and $\mu_j = 1 + \lambda_j$, $j=1,\ldots,n$.
By Lemma~\ref{lem:stochastic} and Gershgorin's circle theorem we know that $\mu_j \in (0,2]$, $j=1,\ldots,n$
and $\mu_j \in (1,2]$ if and only if $j\le n_{\hat s}$.
Taking the exponential and normalizing the rows we get
for the $i$-th row
\begin{align}
W_i^{(r)} 
&= 
\frac{1}{ \sum_{k=1}^K  \Exp(q_i^\tT D^r \hat a_k) } 
\left( \Exp(q_i^\tT D^r \hat a_1) \; \ldots \; \Exp(q_i^\tT D^r \hat a_K) \right)^\tT
\\
&=
\left( \frac{1}{
1+ \frac{\Exp(q_i^\tT D^r \hat a_2)}{\Exp(q_i^\tT D^r \hat a_1)}
 + \ldots + 
\frac{\Exp(q_i^\tT D^r \hat a_K)}{\Exp(q_i^\tT D^r \hat a_1)}
} \;  \ldots\;
 \frac{1}{
\frac{\Exp(q_i^\tT D^r \hat a_1)}{\Exp(q_i^\tT D^r \hat a_K)}
 + \ldots + 
\frac{\Exp(q_i^\tT D^r \hat a_{K-1})}{\Exp(q_i^\tT D^r \hat a_K) } + 1
}
\right)^\tT\\
&=
\left( \frac{1}{1+G_{2,1}(r) + \ldots + G_{K,1}(r)} \; \ldots \; \frac{1}{G_{1,K}(r) + \ldots + G_{K-1,K}(r) + 1} \right)^\tT,
\label{struktur}
\end{align}
where 
\begin{equation} \label{wichtig}
G_{l,k} (r) \coloneqq \frac{\Exp(q_i^\tT D^r \hat a_l)}{\Exp(q_i^\tT D^r \hat a_k)} = \prod_{j=1}^n \exp \left(q_{i,j}  \mu_j^r (\hat a_{j,l} - \hat a_{j,k} ) \right).
\end{equation}
Taking the logarithm results in
\begin{align*}
g_{l,k}(r) &\coloneqq \Log\bigl(G_{l,k}(r)\bigr) 
= \sum_{j=1}^n q_{i,j}  (\hat a_{j,l} - \hat a_{j,k} ) \mu_j^r 
=  \sum_{s=1}^{M} c_{l,k}(s) \mu_{n_s}^r\\
&=\sum_{s=1}^{\hat s} c_{l,k}(s) \mu_{n_s}^r + \sum_{s=\hat s+1}^{M} c_{l,k}(s) \mu_{n_s}^r.
\end{align*}
The first sum contains the eigenvalues larger than 1 and the second one those smaller or equal to 1.
If it exists, let $\tilde s \in \{1,\ldots,\hat s\}$ be the smallest index such that 
$c_{l,k}(\tilde s) \not = 0$. Then it follows
\[
\lim_{r \rightarrow \infty} g_{l,k} (r) =
\left\{
\begin{array}{ll}
 \mathrm{const} &\mathrm{if } \; c_{l,k}(s) = 0 \; \mbox{for all } \; s = 1,\ldots,\hat s, \\
  \sgn(c_{l,k}(\tilde s)) \, \infty &\mathrm{if } \; c_{l,k}(s) = 0 \; \mbox{for all } \; s = 1,\ldots,\tilde s - 1, 
\end{array}
\right.
\]
and consequently
\begin{equation} \label{wow}
\lim_{r \rightarrow \infty} G_{l,k} (r) =
\left\{
\begin{array}{ll}
 \exp(\mathrm{const}) &\mathrm{if } \; c_{l,k}(s) = 0 \; \mbox{for all } \; s = 1,\ldots,\hat s, \\
  +\infty &\mathrm{if } \; c_{l,k}(\tilde s) > 0 , \\
  0&\mathrm{if } \; c_{l,k}(\tilde s) < 0.
\end{array}
\right.
\end{equation}
Hence, by \eqref{struktur}, we see that $\{ W_i^{(r)}\}_r$ converges as $r\rightarrow \infty$.

ii) It remains to examine under which conditions it converges to a unit vector.
Consider a fixed $k \in \{1,\ldots,K\}$. We distinguish two cases.

1. If for all $l \in \{1,\ldots,K\} \backslash \{k\}$ there exists an $s = s(l) \in \{1,\ldots,\hat s\}$
such that $c_{l,k}(s) \not = 0$, then we see by \eqref{wow} that $\lim_{r \rightarrow \infty} W_{i,k}^{(r)} \in \{0,1\}$.
Since, by construction the components of $W_i^{(r)}$ sum up to 1, we conclude that $W_i^{(r)}$ converges to a unit vector.

2. If there exists $l \in \{1,\ldots,K\} \backslash \{k\}$ such that $c_{k,l} (s)  =0$ for all $s=1,\ldots, \hat s$,
then, by \eqref{wow}, the component $W_{i,k}^{(r)} = (1+ G_{l,k}(r) + \sum_{m \not = l,k} G_{m,k}(r) )^{-1}$ is strictly smaller than 1 and becomes
0 if and only if there exists $\tilde l \in \{1,\ldots,K\}$ and  $\tilde s = \tilde s(k,\tilde l) \in \{1,\ldots,\hat s\}$ so that
$ c_{\tilde l,k}(\tilde s) > 0$ and $c_{\tilde l,k} (s) = 0$ for all $s = 1,\ldots,\tilde s-1$.
This is exactly condition (PI) and we are done.
\end{proof}

Let us have a closer look at the property (PI).

If ${\mathcal N}(i) = \{i\}$ for all $i \in \{1,\ldots,n\}$ which means that we do not take the neighborhoods of the points into account, 
then $P = I$ and property {\normalfont{(PI)}} is fulfilled if for
every $i \in \{1,\ldots,n\}$ the vector $\bigl(\mathrm{dist}(f_i,f_k^*)\bigr)_{k=1}^K$ has a unique smallest element.
Then the algorithm gives the same assignment as
just ${\argmin}_k \operatorname{dist}(f_i,f_k^*)$ which is reasonable.

Things change completely if we take care of the point neighborhoods.
First we give an equivalent condition to (PI).

\begin{remark} \label{rem:PI} 
Having a look at part ii) of the above proof we realize by \eqref{wow} and \eqref{struktur} 
that $\{ W_i^{(r)}\}_r$ converges to a unit vector
if and only if one of its components converges to 1.
By \eqref{wow} and \eqref{struktur} this is the case if and only if the following condition \textup{(PI$'$)} is fulfilled:
\begin{itemize}
\item[{\normalfont{(PI$'$):}}] 
For all $i=1,\ldots,n$ there exists $k \in \{1,\ldots,K\}$ such that for all $l \in \{1,\ldots,K\} \backslash \{k\}$ there exists $\tilde s = \tilde s(l) \in \{1,\ldots,\hat s\}$
such that
\[
 c_{l,k}(\tilde s) < 0 \quad \mathrm{and} \quad c_{l,k}(s) = 0 \quad \mbox{for all} \quad s=1,\ldots \tilde s - 1.
 \]
 \end{itemize}
Checking condition  \textup{(PI$'$)} would give us the limit vector of $\{ W_i^{(r)}\}_r$ since it provides the component $k$, where the vector is 1.
\end{remark} 

The following corollary shows that in many practical relevant image labeling tasks 
the iterates in \eqref{alg_triv} converge the \emph{same} vertex of the simplex.

\begin{corollary}\label{leider}
 Let $P$ be a symmetric stochastic matrix which has eigenvalue 1 of multiplicity 1 and all other eigenvalues
 have absolute values smaller than 1.
 Assume that there exists $k \in \{1,\ldots,K\}$  such that
 \begin{equation} \label{cond_leider}
 \prod_{j=1}^n A_{j,k} > \prod_{j=1}^n A_{j,l} \quad  \text{for all} \quad l \in \{1,\ldots,K\}\backslash\{k\}.
 \end{equation}
 Then, \emph{for all} $i=1,\ldots,n$, the vectors $W_i^{(r)}$ generated by \eqref{alg_triv} converge for $r \rightarrow \infty$ to the $k$-th unit vector. 
\end{corollary}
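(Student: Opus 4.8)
The plan is to reduce the claim to condition \textup{(PI$'$)} of Remark~\ref{rem:PI}, whose verification becomes immediate once the coefficient $c_{l,k}(1)$ is computed explicitly. The guiding idea is that under the present spectral hypotheses the eigenvalue $\lambda_1 = 1$ (equivalently $\mu_1 = 2$) is the \emph{strictly} dominant one, so the limiting behaviour of every ratio $G_{l,k}(r)$ in \eqref{wichtig} is governed solely by the sign of the $s=1$ coefficient $c_{l,k}(1)$, and this sign is exactly what assumption \eqref{cond_leider} controls.

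First I would compute $c_{l,k}(1)$. Since $\lambda_1 = 1$ has multiplicity $\kappa_{n_1} = 1$, the index range for $s=1$ collapses to the single index $j=1$, so $c_{l,k}(1) = q_{i,1}(\hat a_{1,l} - \hat a_{1,k})$. By Lemma~\ref{lem:stochastic}~i) the first column of $Q$ equals $\mathbf 1_n/\sqrt n$, i.e.\ $q_{j,1} = 1/\sqrt n$ for every $j$; hence, using $\hat a = Q^\tT \log A$,
\[
\hat a_{1,l} = \sum_{j=1}^n q_{j,1}\,\log A_{j,l} = \frac{1}{\sqrt n}\,\log\prod_{j=1}^n A_{j,l},
\]
and therefore
\[
c_{l,k}(1) = q_{i,1}\bigl(\hat a_{1,l} - \hat a_{1,k}\bigr) = \frac{1}{n}\,\log\frac{\prod_{j=1}^n A_{j,l}}{\prod_{j=1}^n A_{j,k}}.
\]
The decisive feature is that $q_{i,1} = 1/\sqrt n$ is independent of $i$, so $c_{l,k}(1)$ does not depend on the row index $i$.

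Next I would invoke assumption \eqref{cond_leider}: for the distinguished $k$ it gives $\prod_j A_{j,l} < \prod_j A_{j,k}$, whence $c_{l,k}(1) < 0$ for every $l \neq k$. This verifies \textup{(PI$'$)} with the choice $\tilde s(l) = 1$ for all $l$: indeed $c_{l,k}(1) < 0$ holds, and the requirement $c_{l,k}(s) = 0$ for $s < \tilde s$ is vacuous. Since the same $k$ and the same $\tilde s = 1$ work for every $i$, Remark~\ref{rem:PI} (equivalently the argument via \eqref{wow} and \eqref{struktur}) yields that, for each $i$, the sequence $W_i^{(r)}$ converges to the $k$-th unit vector, the same vertex for all rows.

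I expect no genuine obstacle, since the heavy lifting is already contained in Theorem~\ref{thm:conv}. The only point requiring care is the spectral bookkeeping: one must note that $\mu_1 = 2$ strictly exceeds every other $\mu_j = 1+\lambda_j < 2$ (guaranteed by $|\lambda_j| < 1$ for $j \geq 2$), so the $s=1$ term dominates $g_{l,k}(r) = \sum_s c_{l,k}(s)\,\mu_{n_s}^r$ and drives $G_{l,k}(r) \to 0$ for each $l \neq k$; this is precisely what forces $W_{i,k}^{(r)} = \bigl(1 + \sum_{l\neq k} G_{l,k}(r)\bigr)^{-1} \to 1$.
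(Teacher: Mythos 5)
Your proposal is correct and follows essentially the same route as the paper's proof: both compute $c_{l,k}(1) = q_{i,1}(\hat a_{1,l}-\hat a_{1,k})$ using the constant first column $\mathbf 1_n/\sqrt n$ of $Q$, observe that \eqref{cond_leider} forces $c_{l,k}(1)<0$ for all $l\neq k$ independently of $i$, and then conclude via condition \textup{(PI$'$)} with $\tilde s = 1$. The closing paragraph on the dominance of $\mu_1=2$ merely re-derives what is already established in the proof of Theorem~\ref{thm:conv}, so nothing is missing.
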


\begin{proof}
By assumption on $P$ we have in (PI), resp. (PI$'$), for $s = 1$ and all $k,l \in \{1,\ldots,K\}$ that
\[
c_{l,k}(1) = q_{i,1} (\hat a_{1,l} - \hat a_{1,k})
\]
where
$\hat a = Q^\tT \log A$.
By Lemma \ref{lem:stochastic} i) we know that the first column of $Q$ is given by $(q_{i,1})_{i=1}^n = \zb 1_n/{\sqrt{n}}$.
Thus, 
\[
  \hat a_{1,m}  = \frac{1}{\sqrt{n}} \sum_{j=1}^n \log A_{j,m}.
\]
Choosing $k$ according to 
assumption \eqref{cond_leider} we conclude 
$\hat a_{1,l} - \hat a_{1,k} < 0$ for all $l \in \{1,\ldots,K\} \backslash \{k\}$.
Thus,
\[
c_{l,k}(1) = q_{i,1} (\hat a_{1,l} - \hat a_{1,k})  = (\hat a_{1,l} - \hat a_{1,k})/\sqrt{n}  < 0 \quad for \; all \quad l \in \{1,\ldots,K\}\backslash\{k\}
\]
and we can take $\tilde s =1$ in (PI$'$). Then we see by the proof of Theorem~\ref{thm:conv}, resp. Remark~\ref{rem:PI},
that $W_i^{(r)}$ converges for $r \rightarrow \infty$ to the $k$-th unit vector independent of $i$.
\end{proof}

By Lemma \ref{lem:stochastic} i)-iii), the assumptions on $P$ are fulfilled if
$P$ is irreducible and has positive diagonal entries.
In our task this is the case if we use local pixel neighborhoods,
where the central pixels have positive weights $\rho_{i,i} > 0$, $i=1,\ldots,n$.
Assumption \eqref{cond_leider} is fulfilled in many practical relevant cases,
in particular in all our numerical examples.
This underlines the relevance of Step 3 in Algorithm~\ref{alg:mult}.
We finish this subsection by an intuitive example to illustrate the importance of $\varepsilon$.

\begin{example} \label{ex:leider}
\begin{table}
    \centering
    \begin{tabular}{l|cccccccccc}
        $\varepsilon$ & \multicolumn{10}{c}{Labels}\\
        \hline \\[-2.5ex]
        $10^{-10}$ & 1 & 1 & 3 & 3 & 3& 3 & 3 & 2 & 2& 2\\
        $10^{-11}$ & 1 & 1 & 1 & 3 & 3& 3 & 3 & 2 & 2& 2\\
        $10^{-81}$ & 1 & 1 & 1 & 3 & 3& 3 & 3 & 3 & 2& 2\\
        $0$ & 3 & 3 & 3 & 3 & 3& 3 & 3 & 3 & 3& 3\\
    \end{tabular}
    \caption[]{Labeling of a signal with $A = W^{(0)}$ from \eqref{eq:iniA} using different values of $\varepsilon$.}\label{fig:epsilon}
\end{table}

We initialize Algorithm~\ref{alg:mult} with the matrix
\begin{equation}
A \coloneqq
\left(
\begin{array}{cccccccccc}
\frac{4}{5} & \frac{4}{5} & \frac{1}{5}& \frac{1}{5}& \frac{1}{5}& \frac{1}{5}& \frac{1}{5}&\frac{1}{4}&\frac{1}{4}&\frac{1}{4}\\[0.5ex]
\frac{1}{10} & \frac{1}{10} & \frac{1}{5}& \frac{1}{5}&\frac{1}{5}& \frac{1}{5}& \frac{1}{5}&\frac{1}{2}&\frac{1}{2}&\frac{1}{2}\\[0.5ex]
\frac{1}{10} & \frac{1}{10} & \frac{3}{5}& \frac{3}{5}& \frac{3}{5}&\frac{3}{5}& \frac{3}{5}&\frac{1}{4}&\frac{1}{4}&\frac{1}{4}
\end{array}
\right)^\tT\!\!\!, \label{eq:iniA}
\end{equation}
which corresponds to a labeling of a signal of length $10$ with $3$ features and three labels. 
We choose neighborhoods of size 3 with uniform weights and mirror boundary conditions.
Table~\ref{fig:epsilon} depicts the results of the labeling for the different $\varepsilon$,
where the label assignment changes. In all cases the corresponding label assignment vectors
are the vertices of the $\varepsilon$ probability simplex (up to machine precision)
and the labels 1,2,3 were assigned via \eqref{ass_final}.
Note that we get the same labeling for $\varepsilon = 10^{-1}$ as for $\varepsilon = 10^{-10}$,
but the label assignment matrix converges in the first case to  vertices as $(0.8,0.1,0.1)^\tT$. 
Up to $\varepsilon = 10^{-10}$
the pixels are labeled as expected. 
If $\varepsilon$ decreases further, we still get  reasonable results originating from the large starting values of the first segment.
Until double precision is reached, i.e., $\varepsilon = 10^{-323}$,
we see no more changes in the labeling result. 
To be able to handle $\varepsilon = 0$ we look at $w = \log(W)$ without normalization, 
i.e., we iterate \eqref{eq:logiter}. Then we observe after $95$ iterations that the maximal value of $w^{(95)}$ is attained in the last row. 
This means that after normalization in exact arithmetic all pixels will get the label 3.
Since \eqref{cond_leider} reads for our example as
\begin{equation}
\Bigl(\prod_{j=1}^n A_{j,k}\Bigr)_{k=1}^3 = \frac{1}{2 \times 10^7}
\begin{pmatrix}
    64, 4, 243
\end{pmatrix},
\end{equation}
and the third value is the largest one this agrees with Corollary \ref{leider}.
\end{example} 
\subsection{Gradient Ascent Reprojection Algorithm} \label{subsec:gradient}
%
In this subsection we show that Algorithm~\ref{alg:mult} can be seen 
as gradient ascent reprojection algorithm of a certain function
defined on the product manifold of positive numbers.
The positive numbers $\mathbb R^* \coloneqq \mathbb R_{ >0}$ form together with the metric 
on the tangent space $T_x \mathbb R^* = \mathbb R$ given by 
$\langle \xi_1,\xi_2 \rangle_x =  \xi_1 \xi_2/x^2$
a Riemannian manifold. The distance between $x_1,x_2 \in \mathbb R^*$ is defined by
\[
  \operatorname{dist}_{\mathbb R^*} (x_1,x_2) \coloneqq |\log x_1 - \log x_2|.
\]
Indeed, computation in $\mathbb R^*$ can often be reduced to the Euclidean setting
after taking the logarithm. 
The exponential map $\MExp_x\colon T_x \mathbb R^* \rightarrow \mathbb R^*$ 
and its inverse $\MLog_x \colon R^* \rightarrow T_x \mathbb R^*$
read
\begin{equation} \label{EXP}
\MExp_x( \xi) = x \exp \frac{\xi}{x} , \quad \MLog_x y = x \log \frac{y}{x}.
\end{equation}
For $(\gamma_j)_{j=1}^n \in \Delta_n$ and $x_j \in  \mathbb R^*$, $j=1,\ldots,n$ 
the \emph{weighted Riemannian center of mass} on $\mathbb R^*$ is given by
\begin{equation} \label{karcher}
\argmin_{x \in \mathbb R^*} \sum_{j=1}^n \gamma_j \operatorname{dist}_{\mathbb R^*} (x,x_j)^2.
\end{equation}
The minimizer can be computed as follows.
The Riemannian gradient of the squared distance is 
\begin{equation} \label{rg}
\nabla_{\mathbb R^*}\operatorname{dist}_{\mathbb R^*}^2 (\cdot,y) (x) = - 2 \MLog_x y = -2 x \log \frac{y}{x}
= 2 \KL(x,y),
\end{equation}
where $\KL$ denotes the \emph{Kullback-Leibler distance}.
It is defined by 
\begin{align}
\KL\colon \mathbb R^N_{\ge 0} \times \mathbb R^N_{> 0} \rightarrow \mathbb R, \quad
\KL(x,y) \coloneqq \langle x,\log \frac{x}{y} \rangle ,
\end{align}
with the convention $0 \log 0 = 1$ and using the component-wise division in $\tfrac{x}{y}$.
The function is jointly convex in both arguments.
It is the Bregman distance of the Shannon entropy and is related to entropy regularized Wasserstein distances, which have
recently found applications in image processing, see~\cite{Pey2015} and the references therein.
Setting the Riemannian gradient of the objective in \eqref{karcher} to zero using \eqref{rg}, and dividing by 2 we get 
\begin{align} \label{karcher_1}
   0 =  \sum_{j=1}^n\gamma_j x \log \frac{x}{x_j} =  \sum_{j=1}^n \gamma_j \KL(x,x_j) =  \KL\Bigl(x,\prod_{j=1}^n x_j^{\gamma_j} \Bigr).
\end{align}
Consequently, the weighted Riemannian center of mass is just the weighted geometric mean $\operatorname{GM} \coloneqq \prod_{j=1}^n x_j^{\gamma_j}$.

By ${\cal M} \coloneqq (\mathbb R^*)^{nK}$ we denote the usual product manifold. 
We are interested in the function $F\colon{\cal M} \rightarrow \mathbb R$ given by 
\begin{align} \label{functional}
F(W) &\coloneqq\langle \log W , P \log W \rangle \\
&= \sum_{k=1}^K \langle \log \ubar W_k,P\log \ubar W_k \rangle = \sum_{k=1}^K (\log \ubar W_k)^\tT P\log \ubar W_k,
\end{align}
where $\ubar W_k$, $k=1,\ldots,K,$ are the columns of $W$ and $P$ is the weight matrix in \eqref{P}.

In Step 1 of our algorithm we compute $w = \log W$, perform a gradient ascent step of $f(w) = \langle w, Pw\rangle$ in the Euclidean arithmetic
and take the exponential of the result. The direct execution on ${\cal M}$ is given by the following lemma.

\begin{lemma} 
 Step 1 in Algorithm~\ref{alg:mult} is a gradient ascent step of $F$ given by \eqref{functional}
 at $W^{(r)}$  with step length 1.
 In other words, we compute
 \[
   \MExp_W \big(\nabla_{{\cal M}} F(W) \big)	
     = W \circ \exp\left( P \log W \right)
 \]
 at $W = W^{(r)}$, where $\nabla_{{\cal M}}F$ denotes the Riemannian gradient of $F$.
\end{lemma}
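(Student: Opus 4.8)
The plan is to verify directly that the Riemannian gradient ascent step of $F$ on the product manifold $\mathcal M = (\mathbb R^*)^{nK}$ reproduces the formula in Step 1. First I would recall the two structural facts from \eqref{EXP} and the preceding discussion: on each factor $\mathbb R^*$ the metric is $\langle \xi_1,\xi_2\rangle_x = \xi_1\xi_2/x^2$, and the exponential map is $\MExp_x(\xi) = x\exp(\xi/x)$. Because $\mathcal M$ is a product manifold, both the metric and the exponential map act componentwise, so it suffices to compute the Riemannian gradient $\nabla_{\mathcal M}F$ entrywise and then apply $\MExp$ coordinatewise.

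The key computation is the translation between the Euclidean gradient of the ``logged'' objective and the Riemannian gradient of $F$ itself. Writing $w = \log W$ and $f(w) = \langle w, Pw\rangle$ as in the paper, I would use the chain rule together with the relation between the Euclidean and Riemannian gradients: if $\langle\cdot,\cdot\rangle_W$ denotes the product Fisher--Rao-type metric with entrywise factor $1/W_{i,k}^2$, then for any smooth $G$ one has $(\nabla_{\mathcal M}G(W))_{i,k} = W_{i,k}^2\,(\partial_{W_{i,k}}G)(W)$. Applying this to $F(W) = f(\log W)$ gives, by the chain rule $\partial_{W_{i,k}}F = \tfrac{1}{W_{i,k}}(\partial_{w_{i,k}}f)(\log W)$, so that
\begin{equation}
\big(\nabla_{\mathcal M}F(W)\big)_{i,k} = W_{i,k}\,\big(\nabla_{\mathrm{eucl}}f(\log W)\big)_{i,k}.
\end{equation}
Since $f(w) = \langle w, Pw\rangle$ with $P$ symmetric, its Euclidean gradient is $\nabla_{\mathrm{eucl}}f(w) = 2Pw$; however, because the metric factor and the step length are absorbed into the single-step normalization, the relevant quantity entering Step 1 is $(P\log W)_{i,k}$, matching the matrix $P$ applied columnwise to $\log W$ as in \eqref{step1_other}.

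Finally I would insert this gradient into the exponential map. Applying $\MExp_W$ componentwise with $\xi_{i,k} = (\nabla_{\mathcal M}F(W))_{i,k} = W_{i,k}\,(P\log W)_{i,k}$ yields
\begin{equation}
\big(\MExp_W(\nabla_{\mathcal M}F(W))\big)_{i,k}
= W_{i,k}\exp\!\Big(\tfrac{\xi_{i,k}}{W_{i,k}}\Big)
= W_{i,k}\exp\big((P\log W)_{i,k}\big),
\end{equation}
which is exactly $W\circ\exp(P\log W)$, the expression \eqref{step1_other} for $U^{(r+1)}$ at $W = W^{(r)}$. Evaluating at $W = W^{(r)}$ then identifies the update with Step 1 of Algorithm~\ref{alg:mult}, as claimed.

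The main obstacle I anticipate is getting the factors of $W_{i,k}$ and the step length exactly right in the passage from the Euclidean gradient to the Riemannian one. The cancellation is delicate: the metric contributes a factor $W_{i,k}^2$ to raise the gradient, the chain rule contributes a factor $1/W_{i,k}$, and the exponential map divides by $W_{i,k}$ again, and only the precise bookkeeping of these three factors — together with the observation that the constant from $\nabla_{\mathrm{eucl}}f = 2Pw$ is consistent with the stated step length 1 interpretation — makes the identity come out to $W\circ\exp(P\log W)$ rather than some rescaled version. Once this componentwise cancellation is verified, the product-manifold structure makes everything else routine.
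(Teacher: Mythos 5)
Your argument is essentially the paper's own proof: both convert the Euclidean gradient of $F$ into the Riemannian one via the conformal factor $W_{i,k}^2$ coming from the metric $\langle\xi_1,\xi_2\rangle_x=\xi_1\xi_2/x^2$, arrive at $\nabla_{\mathcal M}F(W)=W\circ P\log W$, and apply $\MExp_W$ componentwise (the paper merely organizes the computation column-by-column using the separability of $F$, whereas you work entrywise on the product manifold). The factor of $2$ from $\nabla_w\langle w,Pw\rangle=2Pw$ that you flag as the delicate point is in fact silently dropped in the published proof as well, so your bookkeeping matches the paper's argument exactly.
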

\begin{proof}
By separability of $F$ we can restrict our attention to single columns $\ubar{W} = \ubar{W}_k \in (\mathbb R^*)^n$ of $W$.
A tangent vector $\xi \in T_{\ubar{W}} (\mathbb R^*)^n$ acts on $F$ as
\[\xi F(\ubar{W}) = \langle \nabla F(\ubar{W}) ,\xi \rangle,\]
where 
$\nabla F(\ubar{W}) = \frac{1}{\ubar{W}} \circ P \log \ubar{W}$ is just the Euclidean gradient of $F$ at $\ubar{W}$.
The Riemannian gradient fulfills
\[
\langle \nabla_{{\cal M}} F(\ubar{W}), \xi \rangle_{\ubar{W}} = \langle \nabla F(\ubar{W}) ,\xi \rangle = \langle \frac{1}{\ubar{W}} \circ P \log \ubar{W} ,\xi \rangle
\]
for all $\xi \in  \mathbb R^n$.
By the above definition of the Riemannian metric on $\mathbb R^*$ we get
\[
\langle \nabla_{{\cal M}} F(\ubar{W}), \xi \rangle_{\ubar{W}} = 
\big\langle \frac{1}{\ubar{W}^2} \circ \nabla_{{\cal M}} F(\ubar{W}), \xi \big\rangle  = \big\langle \frac{1}{\ubar{W}} \circ P \log \ubar{W} ,\xi \big\rangle
\]
so that $\nabla_{{\cal M}} F(\ubar{W}) = \ubar{W} \circ P \log \ubar{W}$. Using finally \eqref{EXP} gives
$\MExp_{\ubar{W}} \big(\nabla_{{\cal M}} F(\ubar{W}) \big) = \ubar{W} \circ \exp \left(P \log \ubar{W} \right)$.
\end{proof}

Next we will see that Step 2 and 3 of our algorithm are just projections of the columns of $U^{(r)}$ onto the 
$\varepsilon$ \emph{probability simplex} defined by
\begin{equation} \label{eps-simplex}
 \Delta_{K,\varepsilon} \coloneqq \Bigl\{x \in \mathbb R^K:\sum_{k=1}^K x_k = 1, x \ge \varepsilon\Bigr\} \subset (\mathbb R^*)^K, \quad \varepsilon > 0.
\end{equation}
Here we do not use orthogonal projections, but projections with respect to
Kullback-Leibler distance.
For the interpretation of the Steps 2-3 in our algorithm as  $\KL$ projections we need the following lemma.
\begin{lemma} \label{lem:prop_kl}
 \begin{enumerate}[label={\normalfont{\roman*)}}]
    \item
    Assume that $0 < y_1 \le \ldots \le y_K$.  Then, for $0 < \varepsilon < \frac{1}{K}$, the $\operatorname{KL}$ projection of $y$ onto $\Delta_{K,\varepsilon}$
  is given by
\[
\argmin_{x \in \Delta_{K,\varepsilon}} \KL(x,y) = 
(\underbrace{\varepsilon, \ldots, \varepsilon}_{m}, \tau_m y_{m+1} , \ldots, \tau_m y_{K} )^\tT,
\]
where $\tau_m \coloneqq \frac{1 - m \varepsilon}{\|y\|_1 - \sum_{k=1}^m y_k}$ and $m$ is an index such that
\begin{equation} \label{conds}
 y_m\tau_m \le \varepsilon \quad \text{and} \quad y_{m+1} \tau_m > \varepsilon.
\end{equation}
 \item
  For $\varepsilon = 0$ we have
  \[
  \argmin_{x \in \Delta_K} \KL(x,y) =  y /\|y\|_1.\]
\end{enumerate}
\end{lemma}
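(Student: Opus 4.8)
The plan is to treat both parts as convex optimization problems. Since the scalar map $t \mapsto t\log t$ is strictly convex, $x \mapsto \KL(x,y) = \sum_{k=1}^K x_k \log \frac{x_k}{y_k}$ is strictly convex in $x$, so over each of the convex, compact feasible sets $\Delta_K$ and $\Delta_{K,\varepsilon}$ the minimizer is unique. It therefore suffices in each case to exhibit one feasible point satisfying the optimality conditions.

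For part ii) I would avoid Lagrange multipliers altogether. Writing $z \coloneqq y/\|y\|_1 \in \Delta_K$ and using $\sum_k x_k = 1$ on the simplex, I rewrite
\[
\KL(x,y) = \sum_{k=1}^K x_k \log \frac{x_k}{\|y\|_1\, z_k} = \KL(x,z) - \log \|y\|_1 .
\]
Since $\log\|y\|_1$ is constant on $\Delta_K$, and since $\KL(x,z) \ge 0$ for $x,z \in \Delta_K$ with equality if and only if $x = z$ (Gibbs' inequality, i.e.\ nonnegativity of the $\KL$ distance between probability vectors), the unique minimizer is $x = z = y/\|y\|_1$.

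For part i) I would invoke the KKT conditions for minimizing $\KL(\cdot,y)$ subject to $\sum_k x_k = 1$ and $x_k \ge \varepsilon$. Introducing a multiplier $\lambda$ for the equality and $\mu_k \ge 0$ for the constraints $x_k \ge \varepsilon$, stationarity gives $\log \frac{x_k}{y_k} + 1 - \lambda - \mu_k = 0$, hence $x_k = y_k\, e^{\lambda - 1 + \mu_k}$. Setting $\tau \coloneqq e^{\lambda - 1}$, complementary slackness $\mu_k(x_k - \varepsilon) = 0$ with $\mu_k \ge 0$ forces a thresholding structure: either $x_k = \tau y_k$ (with $\mu_k = 0$, requiring $\tau y_k \ge \varepsilon$) or $x_k = \varepsilon$ (with $\mu_k \ge 0$, which reads $\tau y_k \le \varepsilon$). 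Because the $y_k$ are sorted increasingly, the clipped coordinates are exactly the smallest ones $k = 1,\ldots,m$, and imposing $\sum_k x_k = 1$ yields $\tau = \tau_m = (1 - m\varepsilon)/(\|y\|_1 - \sum_{k=1}^m y_k)$, giving precisely the claimed vector. The consistency of the clip/no-clip split is then exactly condition \eqref{conds}: $\tau_m y_m \le \varepsilon$ (coordinate $m$ is clipped) and $\tau_m y_{m+1} > \varepsilon$ (coordinate $m+1$ is not).

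The remaining work, and the main obstacle, is to confirm that this candidate is genuinely feasible and KKT-satisfying, and that a consistent index $m$ exists. I would check $\tau_m > 0$ (using $m \le K-1$ and $\varepsilon < \frac1K$, so that $1 - m\varepsilon > 0$), feasibility of the unclipped coordinates via monotonicity ($\tau_m y_k \ge \tau_m y_{m+1} > \varepsilon$ for $k > m$), and nonnegativity of the multipliers $\mu_k = \log\frac{\varepsilon}{\tau_m y_k} \ge 0$ on the clipped block (equivalent to $\tau_m y_k \le \varepsilon$). Existence of an $m$ satisfying \eqref{conds} follows from a standard water-filling monotonicity argument in $m$; together with strict convexity this pins down the unique minimizer and completes the proof.
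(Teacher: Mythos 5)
Your proof of part i) is correct and follows essentially the same route as the paper: both write down the KKT conditions for minimizing $\KL(\cdot,y)$ over $\Delta_{K,\varepsilon}$, use stationarity to get $x_k = \tau y_k e^{\mu_k}$, exploit complementary slackness and the ordering of the $y_k$ to see that exactly the $m$ smallest coordinates are clipped to $\varepsilon$, solve the normalization for $\tau_m$, and verify dual feasibility ($\mu_k\ge 0$ on the clipped block) and primal feasibility ($\tau_m y_k>\varepsilon$ for $k>m$) from the two inequalities in \eqref{conds}; the checks you list at the end are precisely the ones the paper carries out. The only genuine difference is part ii): the paper disposes of it with ``follows directly from i)'', whereas you give a self-contained argument via the identity $\KL(x,y)=\KL\bigl(x,y/\|y\|_1\bigr)-\log\|y\|_1$ and Gibbs' inequality. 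That alternative is clean and arguably preferable, since it avoids any limiting argument in $\varepsilon$ and handles boundary points of $\Delta_K$ (where some $x_k=0$) without fuss. Note also that neither you nor the paper proves existence of an index $m$ satisfying \eqref{conds} --- you defer it to a ``water-filling monotonicity argument'' --- but the lemma as stated takes such an $m$ as given, so this is not a gap.
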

\begin{proof}
i) 
We reformulate the convex problem in ii) as
\[
\argmin_{x \in \mathbb R^K} \KL(x,y) \quad \mbox{subject to} \quad \langle x, \zb 1_K \rangle = 1, \; x - \varepsilon \zb 1_K \ge 0.
\]
Using the Lagrangian
\[
L(x,p,\tilde p) \coloneqq \KL(x,y) + \tilde p (\langle \zb 1_K,x \rangle -1) + \langle p,x - \varepsilon \zb 1_K\rangle
\]
the (necessary and sufficient) optimality conditions for a minimizer read 
\begin{align}
& (1) \; \; \nabla_x L(x,p,q) = \log \frac{x}{y} + \zb 1_K + \tilde p \zb 1_K + p = 0,\\
& (2) \; \; \langle x, \zb 1_K \rangle = 1, \qquad (3) \; \; x - \varepsilon \zb 1_K \ge 0,\\
& (4) \; \;  p \le 0, \qquad (5)\;  \; \langle x - \varepsilon \zb 1_K,p \rangle = 0. 
\end{align}
The first condition can be rewritten as
\begin{equation} \label{firstKKTa}
x = \exp(-\tilde p - 1) \, y \circ \exp(-p)
\end{equation}
and together with the second condition and $q \coloneqq \exp(- p)$ we obtain
\begin{equation} \label{firstKKT}
x = \frac{1}{\langle y \circ q,\zb 1_K\rangle} \, y \circ q 
= 
\frac{1}{\langle \frac{y}{\|y\|_1} \circ q,\zb 1_K\rangle} \, \frac{y}{\|y\|_1} \circ q.
\end{equation}
By the last equality we can continue our considerations with a vector $y$ fulfilling $\|y\|_1 = 1$. 

Assume that $x_1 = \ldots = x_m = \varepsilon$ and  $x_{k} >  \varepsilon$ for $k=m+1,\ldots,K$.
Then we have by the fifth condition that $q_k = 1$ for $k=m+1,\ldots,K$ so that
\[
x = \frac{1}{\sum_{k=1}^m q_k y_k + s_m} \left(q_1 y_1,\ldots , q_m y_m, y_{m+1},\ldots,y_K\right)^\tT,
\]
where $s_m = \sum_{k=m+1}^K y_k = 1 - \sum_{k=1}^m y_k$.
This implies $q_ky_k = c$ for all $k=1,\ldots,m$.
Now $x_k = \frac{c}{mc+s_m} = \varepsilon$ gives $c = \frac{s_m \varepsilon}{1-m \varepsilon} = \frac{\varepsilon}{\tau_m}$
so that
\[q_k = \frac{\varepsilon}{\tau_m y_k}, \quad k=1,\ldots,m.\]
Since the $y_k$ are non-descending, we see that the fourth optimality condition 
$q_1 \ge \ldots \ge q_m = \frac{\varepsilon}{\tau_m y_m}  \ge 1$
is fulfilled if the first inequality in \eqref{conds} holds true.
Finally, we have 
\[
\varepsilon < x_{m+1} = y_{m+1} \tau_m \le \ldots \le y_{K} \tau_m = x_K
\]
if the second inequality in  \eqref{conds} is satisfied.
Thus the vector determined by ii) fulfills all optimality conditions and we are done.
\\
ii) This assertion  follows directly from i).
\end{proof}
As an immediate consequence of the previous lemma and \eqref{karcher_1} we obtain the following corollary.

\begin{corollary} \label{alg_step2_3}
 The Steps 2 and 3 in Algorithm~\ref{alg:mult} are the KL projection of $U_i^{(r)}$, $i=1,\ldots,n$
 onto $\Delta_{K,\varepsilon}$. The whole iteration in Step 1-3 can be rewritten as 
\[
 W_i^{(r+1)} = \argmin_{W \in \Delta_{K,\varepsilon}} \sum_{j \in {\mathcal N}_\rho (i)} \rho_{i,j} \KL(W, W_i^{(r)} \circ W_j^{(r)}).
\]
\end{corollary}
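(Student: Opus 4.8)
The plan is to verify the two assertions separately, in both cases reducing everything to Lemma~\ref{lem:prop_kl} and the barycentric identity~\eqref{karcher_1}. For the first claim I would begin by noting that Step~2 computes $V_i^{(r+1)} = U_i^{(r+1)}/\|U_i^{(r+1)}\|_1$, which by Lemma~\ref{lem:prop_kl}~ii) is precisely the $\KL$ projection of $U_i^{(r+1)}$ onto $\Delta_K$. The crucial point is that the $\KL$ projection onto $\Delta_{K,\varepsilon}$ is invariant under multiplication of its argument by a positive scalar; this is exactly what the second equality in~\eqref{firstKKT} records. Hence projecting $U_i^{(r+1)}$ onto $\Delta_{K,\varepsilon}$ yields the same result as projecting the normalized vector $V_i^{(r+1)}$, so the normalization in Step~2 is a harmless preprocessing and I may assume from now on that $\|V_i^{(r+1)}\|_1 = 1$.

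It then remains to show that Step~3, started from $y \coloneqq V_i^{(r+1)} \in \Delta_K$, returns exactly the vector described in Lemma~\ref{lem:prop_kl}~i). Since Step~3 acts componentwise and selects indices through $\argmin$, it is equivariant under permutations, so I would assume $y_1 \le \ldots \le y_K$. The while-loop is then a greedy procedure that repeatedly clamps the currently smallest entry to $\varepsilon$, enlarging the active set ${\cal I}$ by at least one index per pass and rescaling the surviving entries by $\tau_{{\cal I}}$; since the entries are sorted, the terminal set is an initial segment $\{1,\ldots,m\}$. The loop stops exactly when no rescaled surviving entry falls below $\varepsilon$, i.e.\ when $y_{m+1}\tau_m > \varepsilon$ while $y_m \tau_m \le \varepsilon$, which are the defining conditions~\eqref{conds} for the optimal index $m$.

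The main obstacle is the bookkeeping inside this loop: in the algorithm $\tau_{{\cal I}}$ is recomputed in each pass from the \emph{current} (already rescaled and partly clamped) entries rather than from $y$, so a surviving entry is multiplied by a product $\tau_{{\cal I}_1}\tau_{{\cal I}_2}\cdots$ of successively updated factors. The key fact I would establish is that this product telescopes: if ${\cal I}$ grows from size $n_1$ to size $n_2$ with intermediate factor $\tau_{{\cal I}_1}$, then a short computation using the sorted partial sums $\sum_{k \in {\cal I}_1} y_k$ shows $\tau_{{\cal I}_1}\tau_{{\cal I}_2} = \frac{1-n_2\varepsilon}{1-\sum_{k\le n_2} y_k}$, which is precisely the one-shot factor $\tau_{n_2}$ of Lemma~\ref{lem:prop_kl}~i). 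Iterating this identity shows that upon termination the surviving entries carry exactly the factor $\tau_m$ and the clamped ones equal $\varepsilon$; hence the output of Steps~2--3 is the $\KL$ projection of $U_i^{(r+1)}$ onto $\Delta_{K,\varepsilon}$.

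For the second assertion I would rewrite the averaged objective. Using $\KL(W,z)=\langle W,\log W\rangle-\langle W,\log z\rangle$, the normalization $\sum_{j\in{\mathcal N}_\rho(i)}\rho_{i,j}=1$, and $\log(W_i^{(r)}\circ W_j^{(r)})=\log W_i^{(r)}+\log W_j^{(r)}$, the sum $\sum_{j}\rho_{i,j}\KL(W, W_i^{(r)}\circ W_j^{(r)})$ collapses to $\langle W,\log W\rangle-\langle W,\log(W_i^{(r)}\circ\prod_j (W_j^{(r)})^{\rho_{i,j}})\rangle=\KL(W, U_i^{(r+1)})$, where $U_i^{(r+1)}$ is the output of Step~1; this is exactly the barycentric collapse behind~\eqref{karcher_1}. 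Since the two functions of $W$ coincide, their minimizers over $\Delta_{K,\varepsilon}$ agree, and by the first part this common minimizer is $W_i^{(r+1)}$, which proves the displayed formula.
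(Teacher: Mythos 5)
Your argument is correct and follows exactly the route the paper intends: the paper states the corollary as an immediate consequence of Lemma~\ref{lem:prop_kl} and the barycentric identity~\eqref{karcher_1}, which are precisely the two ingredients you use for the projection claim and the collapse of the weighted $\KL$ sum to $\KL(W,U_i^{(r+1)})$. Your additional verification that the while-loop's successively recomputed factors $\tau_{\cal I}$ telescope to the one-shot factor $\tau_m$ of Lemma~\ref{lem:prop_kl}~i) is a detail the paper leaves implicit, and your computation of it is correct.
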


\section{Relation to the Numerical Scheme in~\texorpdfstring{\cite{APSS2016}}{[4]}} \label{sec:mf_mod}
%
In this section we want to demonstrate how Algorithm~\ref{alg:mult}
is related to an iterative scheme proposed by {\AA}str{\"o}m, Petra, Schmitzer and Schn\"orr~\cite{Astroem2016d,APSS2016,Astroem2016a}
without stretching  details on the geometry of the probability simplex  which can be found in~\cite{APSS2016}.
The authors consider the interior $\mathring \Delta_K$ of the probability simplex~\eqref{prob_simplex},
which together with the Fisher-Rao metric forms a non complete Riemannian manifold. 
The tangent space $T_x{\mathring \Delta_K}$ of $\mathring \Delta_K$
at $x \in \mathring \Delta_K$ is given by the vectors
in $\mathbb R^K$ whose components sum up to zero.
Let ${\mathcal S} \coloneqq (\mathring \Delta_K)^n$ be the corresponding product manifold.
By ${\MExp_x}\colon T_x{\mathcal S} \rightarrow {\mathcal S}$
we denote the manifold exponential map which, by the incompleteness of the manifold, is only defined on a subset of  $T_x{\mathcal S}$.
Then they consider the function $E\colon  {\mathcal S} \rightarrow \mathbb R$ given by
\[
	E(W) \coloneqq  \langle S(W),W \rangle,
\]
where
$S_i(W)$ is the Riemannian center of mass (Karcher mean) of $\{ L_j \in \mathring \Delta_K: j \in {\mathcal N} (i) \}$
and
\[
L(W) \coloneqq \MExp_W (-U), \quad U_i \coloneqq {\mathcal D}_i^\tT - \frac{1}{K} \langle \zb 1_K, {\mathcal D}_i \rangle \zb 1_K
\]
with the distance matrix ${\mathcal D}$ given by \eqref{dist}. 
Since the problem does not admit a closed form solution, 
the authors propose to apply 
a Riemannian gradient ascent flow on ${\mathcal W}$.
In their final numerical scheme they utilize certain simplifications, 
in particular a lifting map for the manifold exponential mapping to cope with the incompleteness of $\mathcal{S}$.
Finally they arrive at the following iterative scheme:
\begin{align}
W_i^{(0)} &\coloneqq  \frac{1}{K} \zb 1_K, \label{it_ch_0}\\
L(W_j^{(r)}) &\coloneqq  \frac{\exp(-{\mathcal D}_j)\circ W_j^{(r)}}{\langle \exp(-{\mathcal D}_j),W_j^{(r)} \rangle},\label{it_ch_1}\\
S_i(W^{(r)}) &\coloneqq 
\frac{ \prod_{j \in {\mathcal N}_{\rho} (i) } L(W_j^{(r)})^{\rho_{i,j}} }{ \langle \zb 1_K, \prod_{j \in {\mathcal N}_{\rho}(i) } L(W_j^{(r)})^{\rho_{i,j}} \rangle},\label{it_ch_2}\\
W_i^{(r+1)} &\coloneqq \frac{ W_i^{(r)} \circ S_i(W^{(r)}) }{ \langle W_i^{(r)},S_i(W^{(r)}) \rangle} \label{it_ch_3}.
\end{align}
where they use the weights $\rho_{i,j}\coloneqq 1/N_i$, $N_i \coloneqq |{\mathcal N}_{\rho}(i)|$.

How does this relate to our setting?
Plugging $L(W_j^{(r)})$ into the expression for $S_i(W^{(r)})$ we obtain
\begin{align*}
S_i(W^{(r)}) 
&=  \frac{ 
\prod_{j \in {\mathcal N}_{\rho}(i) } 
\left( 
\frac{\exp(-{\mathcal D}_j)\circ W_j^{(r)}}{\langle \exp(-{\mathcal D}_j),W_j^{(r)} \rangle} 
\right)^{\rho_{i,j}}
}
{
\left\langle 
\zb 1_K,  \prod_{j \in {\mathcal N}_{\rho}(i)}
\left( \frac{\exp(-{\mathcal D}_j)\circ W_j^{(r)}}{\langle \exp(-{\mathcal D}_j),W_j^{(r)} \rangle}  \right)^{\rho_{i,j}} 
\right\rangle
}\\
&=
 \frac{ 
  \prod_{j \in {\mathcal N}_{\rho}(i) } \left( \exp(-{\mathcal D}_j)\circ W_j^{(r)} \right)^{\rho_{i,j}}}
 { \Big\lVert
  \prod_{j \in {\mathcal N}_{\rho}(i) } \left( \exp(-{\mathcal D}_j)\circ W_j^{(r)} \right)^{\rho_{i,j}} \Big\rVert_1}\\
  &=
  \frac{ \prod_{j \in {\mathcal N}_{\rho}(i) } \exp(-{\mathcal D}_j)^{\rho_{i,j}} \circ \prod_{j \in {\mathcal N}_{\rho}(i) } \left( W_j^{(r)} \right)^{\rho_{i,j}}}
  {\big\lVert \prod_{j \in {\mathcal N}_{\rho}(i) } \exp(-{\mathcal D}_j)^{\rho_{i,j}}    \circ \prod_{j \in {\mathcal N}_{\rho}(i) } \left(W_j^{(r)} \right)^{\rho_{i,j}}\big\rVert_1}.
\end{align*}
Using
$A_i \coloneqq  \prod_{j \in {\mathcal N}_{\rho}(i) } \exp(-{\mathcal D}_j)^{\rho_{i,j}}/\|\prod_{j \in {\mathcal N}_{\rho}(i) } \exp(-{\mathcal D}_j)^{\rho_{i,j}}\|_1$,
i.e., $\rho_{i,j} = \alpha_{i,j}$ in \eqref{eq:initialization}, we get
\begin{align*}
 S_i(W^{(r)}) 
    &= c \,  A_i \circ \prod_{j \in {\mathcal N}_{\rho}(i) } \left( W_j^{(r)} \right)^{\rho_{i,j}},
  \quad c\coloneqq \frac{\big\lVert\prod_{j \in {\mathcal N}_{\rho}(i) } 
  \exp(-{\mathcal D}_j)^{\rho_{i,j}}\big\rVert_1}{\big\lVert \prod_{j \in {\mathcal N}_{\rho}(i) } \exp(-{\mathcal D}_j)^{\rho_{i,j}}    
  \circ \prod_{j \in {\mathcal N}_{\rho}(i) } \left(W_j^{(r)} \right)^{\rho_{i,j}}\big\rVert_1}
\end{align*}
Substituting this into  \eqref{it_ch_3} and dividing the denominator and numerator by the constant $c$ 
we arrive finally at
\begin{equation} \label{V}
V_i^{(r+1)}
= \frac{ A_i \circ W_i^{(r)} \circ \prod\limits_{j \in {\mathcal N}_{\rho}(i)} (W_j^{(r)} )^{\rho_{i,j}}}{\| A_i \circ W_i^{(r)} 
\circ \prod\limits_{j \in {\mathcal N}_{\rho}(i)} (W_j^{(r)} )^{\rho_{i,j}}\|_1}.
\end{equation}
Up to the additional factor $A_i$ formula \eqref{V} coincides with the iterations in Algorithm~\ref{alg:mult}.		

What about the starting point?
Starting with \eqref{it_ch_0} we obtain after the first step  in \eqref{it_ch_3} the matrix
\begin{align*}
W_i^{(1)} &= 
\frac{A_i \circ (\zb 1_K/N_i) \circ  \prod_{j \in {\mathcal N}_{\rho}(i) } \left( \zb 1_K/N_i \right)^{\rho_{i,j}} }
{\Big\| A_i \circ (\zb 1_K/N_i) \circ  \prod_{j \in {\mathcal N}_{\rho}(i) }\left( \zb 1_K/N_i \right)^{\rho_{i,j}}\Big\|_1}
 = \frac{A_i}{\|A_i\|_1} = A_i.
\end{align*}
Therefore we can also start with $A_i$ as in Algorithm~\ref{alg:mult}. 
Clearly, we can use again more flexible weights. 
In summary the numerical scheme in~\cite{APSS2016} has the first step 
\begin{equation} \label{step_1_ch}
		U_i^{(r+1)}
		= A_i \circ W_i^{(r)} \circ \prod\limits_{j \in {\mathcal N}(i)} (W_j^{(r)} )^{\rho_{i,j}}.
\end{equation}
The convergence analysis for $\varepsilon = 0$ is basically the same as in Subsection~\ref{subsec:convergence}.
For sake of completeness we add it below.
Again we emphasize that that bounding the values away from zero as, e.g., in the third step of our algorithm
is not just numerical cosmetics. Without such a step the iterates converge similar as in Corollary \ref{leider} to the same vertex of the 
probability simplex which results in a trivial labeling.

\begin{corollary} \label{cor:conv}
The sequence of iterates $\{ W_i^{(r)} \}_r$, $i=1,\ldots,n$ generated by 
\eqref{step_1_ch} converges as $r\rightarrow \infty$.
It converges to a unit vector if and only if
$A$ fulfills {\normalfont{(PI)}} but with 
 \[
 c_{l,k}(s) \coloneqq \sum_{j= n_{s}}^{n_{s} + \kappa_s -1} q_{i,j} \lambda_j^{-1} (\hat a_{j, l}- \hat a_{j,k} ).
 \]
\end{corollary}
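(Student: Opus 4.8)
The plan is to mirror the proof of Theorem~\ref{thm:conv}, the only change being that the extra factor $A_i$ turns the linear log-domain recursion \eqref{one} into an inhomogeneous (affine) one. First I would pass to logarithms exactly as in the proof of Theorem~\ref{thm:conv}: since the intermediate normalizations may be deferred to the final step, setting $w_i^{(r)} \coloneqq \log \tilde W_i^{(r)}$ for the unnormalized iterates and taking the logarithm of \eqref{step_1_ch} gives
\[
w_i^{(r+1)} = a_i + w_i^{(r)} + \sum_{j} \rho_{i,j} w_j^{(r)}, \qquad a \coloneqq \log A,
\]
that is, $w^{(r+1)} = a + (I_n+P)\, w^{(r)}$ with $w^{(0)} = a$. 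Solving this affine recursion yields the closed form $w^{(r)} = \sum_{m=0}^{r} (I_n+P)^m\, a$, the analogue of \eqref{one}.

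Next I would diagonalize. With $P = Q\Lambda Q^\tT$ as in \eqref{decomp}, $\hat a \coloneqq Q^\tT a$ and $\mu_j \coloneqq 1+\lambda_j$, the closed form becomes
\[
w^{(r)} = Q \,\Diag\!\Big( \textstyle\sum_{m=0}^{r} \mu_j^m \Big)_{j=1}^n \hat a .
\]
The geometric sum evaluates to $\sum_{m=0}^{r}\mu_j^m = (\mu_j^{r+1}-1)/\lambda_j$ whenever $\lambda_j \neq 0$ (and to $r+1$ when $\lambda_j = 0$). Recalling from the proof of Theorem~\ref{thm:conv} that $\mu_j \in (1,2]$ exactly for $j \le n_{\hat s}$, the factors belonging to groups $s \le \hat s$ grow like $\mu_{n_s}^{r+1}/\lambda_{n_s}$, those with $\lambda_{n_s}<0$ converge to a constant, and a possible zero eigenvalue contributes only a linearly growing summand.

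Then I would form the same log-ratios $g_{l,k}(r) = \log G_{l,k}(r)$ as in \eqref{struktur}--\eqref{wichtig}. Grouping the geometric-sum factors by distinct eigenvalues reproduces the structure of Theorem~\ref{thm:conv}, except that each coefficient $c_{l,k}(s)$ is replaced by $\lambda_{n_s}^{-1} c_{l,k}(s)$, which is precisely the redefined coefficient in the statement; explicitly
\[
g_{l,k}(r) = \sum_{s=1}^{\hat s} \lambda_{n_s}^{-1} c_{l,k}(s)\, \mu_{n_s}^{r+1} + \mathrm{const} + o(1).
\]
Since $\mu_{n_1} > \dots > \mu_{n_{\hat s}} > 1$, the smallest index $s \le \hat s$ with nonvanishing coefficient governs the limit, and because $\lambda_{n_s} > 0$ for these $s$ the sign of $\lambda_{n_s}^{-1}c_{l,k}(s)$ equals that of $c_{l,k}(s)$. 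The trichotomy \eqref{wow} then holds verbatim with the new coefficients, so each $G_{l,k}(r)$ converges in $[0,\infty]$, whence $\{W_i^{(r)}\}_r$ converges by \eqref{struktur} (part i)), and the identical case distinction as in part ii) of Theorem~\ref{thm:conv} identifies the unit-vector limits with condition (PI) stated through $\lambda_{n_s}^{-1} c_{l,k}(s)$.

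I expect the main obstacle to be the bookkeeping around the geometric-sum factor $(\mu_j^{r+1}-1)/\lambda_j$: one has to check that the accumulated constant term and the linearly growing contribution of any zero eigenvalue are dominated by the exponentially growing terms whenever some $c_{l,k}(s) \neq 0$ with $s \le \hat s$, so that neither affects the leading-order sign analysis. Once this is verified, the argument is entirely parallel to Theorem~\ref{thm:conv}, which is why the rescaling by the positive factors $\lambda_{n_s}^{-1}$ is the only visible change in the final condition.
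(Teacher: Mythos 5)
Your argument is correct and follows essentially the same route as the paper: there too one defers the normalizations, takes logarithms to get the affine recursion, writes $w^{(r)} = \bigl(I + (I+P) + \cdots + (I+P)^r\bigr)a = Q\,\Lambda^{-1}(D^{r+1}-I)\,\hat a$, and reruns the sign analysis of Theorem~\ref{thm:conv} with the coefficients rescaled by $\lambda_{n_s}^{-1}$. If anything you are slightly more careful than the paper, which simply writes $P^{-1}$ and thereby silently assumes $P$ has no zero eigenvalue, whereas you correctly observe that a zero eigenvalue turns the corresponding geometric sum into the linearly growing factor $r+1$ rather than a constant.
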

\begin{proof}
 The proof follows the same lines as those of Theorem~\ref{thm:conv} so that we highlight only the differences.
 Instead of \eqref{dec_1} we obtain
 \begin{align*}
 w^{(r)} &= \left( I +(I+P) + \ldots + (I + P)^r \right) w^{(0)} = P^{-1} \left( (I+P)^{r+1} - I \right) w^{(0)}\\
&= Q \, \Lambda^{-1} (D^{r+1} - I) \,\hat a.
\end{align*}
Taking the componentwise exponential and normalizing we get 
\[
W_i^{(r)} = \Big( \big(1+ \sum\limits_{\substack{l=1\\l \not = k}}^K G_{l,k} (r) \big)^{-1} \Big)_{k=1}^K,
\]
where
\[
G_{l,k}(r) \coloneqq \frac{\exp(q_i^\tT \Lambda^{-1} (D^{r+1} - I) \hat a_l)}{\exp(q_i^\tT \Lambda^{-1} (D^{r+1} - I) \hat a_k)}.
\]
In particular we have 
\begin{align*}
G_{l,k} (r)&= \prod_{j=1}^n \exp \left( q_{i,j} (\mu_j^{r+1} -1) \lambda_j^{-1} (\hat a_{j,l} - \hat a_{j,k}) \right),\\
g_{l,k} (r)&= \log( G_{l,k} (r) ) = \sum_{j=1}^n q_{i,j} (\mu_j^{r+1} -1) \lambda_j^{-1} (\hat a_{j,l} - \hat a_{j,k})\\
&= \sum_{s=1}^{M} c_{l,k}(s) (\mu_{n_s}^{r+1} -1).
\end{align*}
The final conclusions are analogous as in the proof of Theorem~\ref{thm:conv}.
\end{proof}
\section{Numerical Examples} \label{sec:numerics}
In this section we demonstrate the performance of Algorithm~\ref{alg:mult} for the partitioning of images. 
We start with color images having values in the Euclidean space and turn to manifold-valued images afterwards.
In particular we consider diffusion tensor magnetic resonance images (DT-MRI) 
with values in the Hadamard manifold of symmetric positive definite matrices
and electron backscatter diffraction (EBSD) images, whose values are taken from quotient spaces 
of the manifold of rotations $\mathrm{SO}(3)$. 

Let
${\mathcal G} \coloneqq \{1,\ldots,N\} \times \{1,\ldots,M\}$
be the image grid and
$\emptyset \not = {\mathcal V} \subseteq {\mathcal G}$
the set of available, in general noisy
pixels. In particular, we have ${\mathcal V} = {\mathcal G}$ in the case 
of no missing pixels.
We are interested in labeling an image
$f\colon {\mathcal V} \rightarrow {\mathcal M}$.

In our numerical examples we stop if the average entropy of the labeling matrix given by
\begin{equation}
- \frac{1}{\lvert\mathcal{G}\rvert}\sum_{i\in\mathcal{G}}\sum_{k=1}^K W_{i,k}^{(r)}\log\bigr(W_{i,k}^{(r)}\bigl),
\end{equation}
drops below a certain threshold. 
If not stated otherwise, we take $10^{-3}$ as threshold as in~\cite{APSS2016}, where the same stopping criterion was suggested. If not stated otherwise the same weights were used for $\alpha$ and $\rho$.

The algorithm was implemented in \textsc{MatLab} 2016a.

\paragraph{Color Images.}
\begin{figure}[tp]
	\centering
	\begin{subfigure}[b]{0.3\textwidth}
		\centering
		\includegraphics[width = 0.98\textwidth]{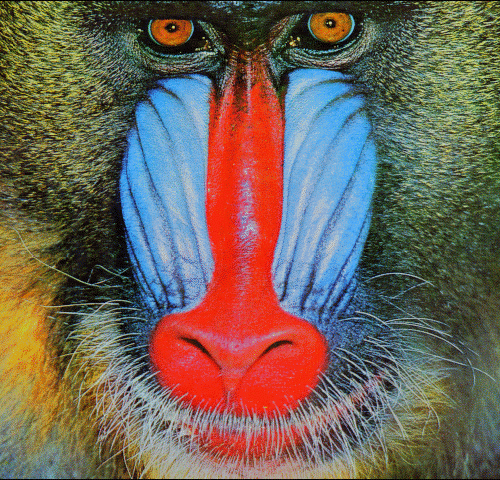}
		\caption[]{Original image \lstinline!mandrill!}\label{fig:mand:orig}
	\end{subfigure}
	\begin{subfigure}[b]{0.3\textwidth}
		\centering
		\includegraphics{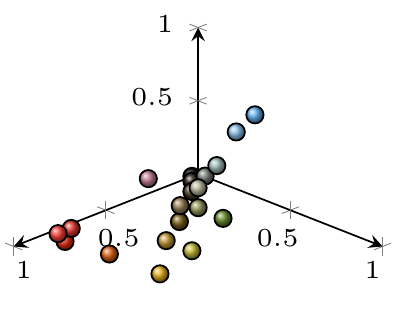}
		\caption[]{Prior features}\label{fig:mand:color}
	\end{subfigure}
	\\
	\begin{subfigure}{0.3\textwidth}
		\centering
		\includegraphics[width = 0.98\textwidth]{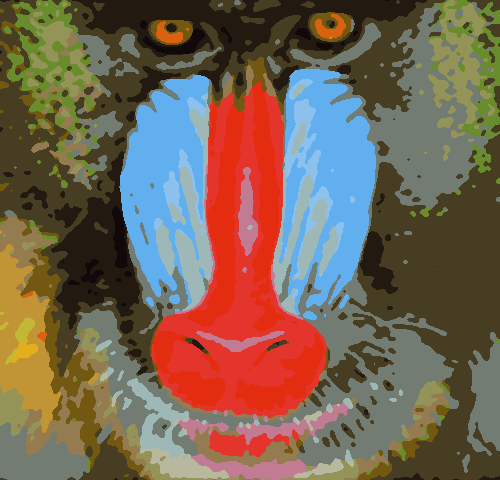}
		\caption[]{Local weights, $s = 3$}\label{fig:mand:without3}
	\end{subfigure}
	\begin{subfigure}{0.3\textwidth}
		\centering
		\includegraphics[width = 0.98\textwidth]{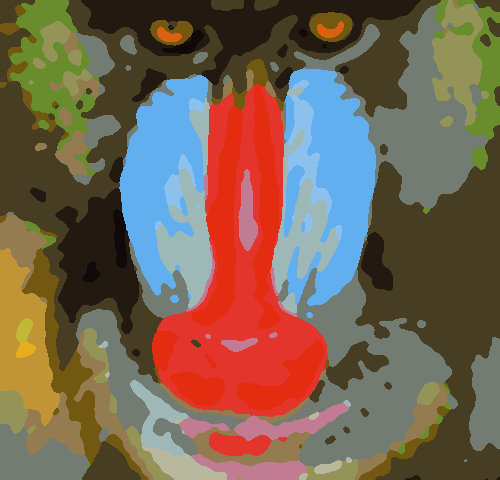}
		\caption[]{Local weights, $s = 5$}\label{fig:mand:without5}
	\end{subfigure}
	\begin{subfigure}{0.3\textwidth}
		\centering
		\includegraphics[width = 0.98\textwidth]{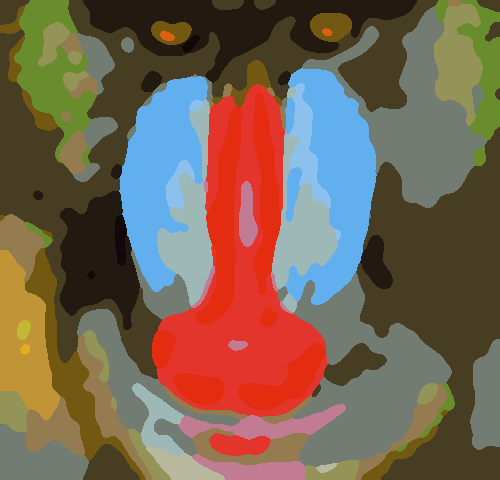}
		\caption[]{Local weights, $s = 7$}\label{fig:mand:without7}
	\end{subfigure}
	\\
	\begin{subfigure}{0.3\textwidth}
		\centering
		\includegraphics[width = .98\textwidth]{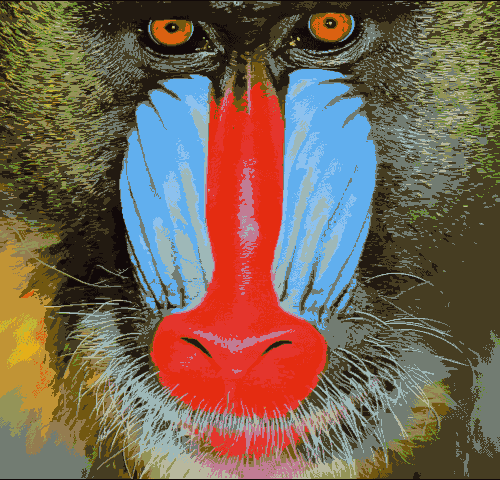}
		\caption[]{Nonlocal weights, \\$s_{\mathrm{nl}} =7$}\label{fig:mand:nl_ll}			
	\end{subfigure}
	\begin{subfigure}{0.3\textwidth}
		\centering
		\includegraphics[width = .98\textwidth]{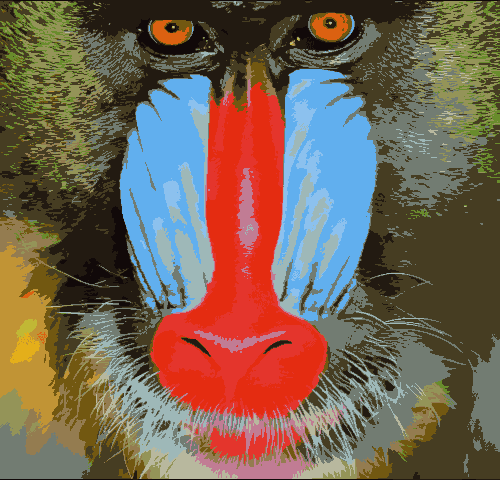}
		\caption[]{Nonlocal weights, $s_{\mathrm{nl}} =19$}\label{fig:mand:nl_l}			
	\end{subfigure}
	\begin{subfigure}{0.3\textwidth}
		\centering
		\includegraphics[width = .98\textwidth]{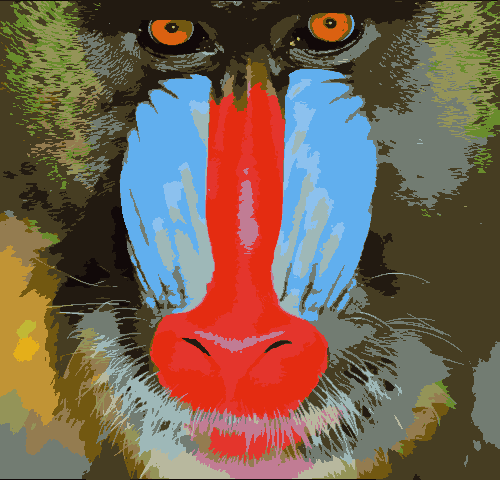}
		\caption[]{Nonlocal weights, $s_{\mathrm{nl}} =37$}\label{fig:mand:nl_ml}			
	\end{subfigure}
	\\
	\begin{subfigure}{0.3\textwidth}
		\centering
		\includegraphics[width = .98\textwidth]{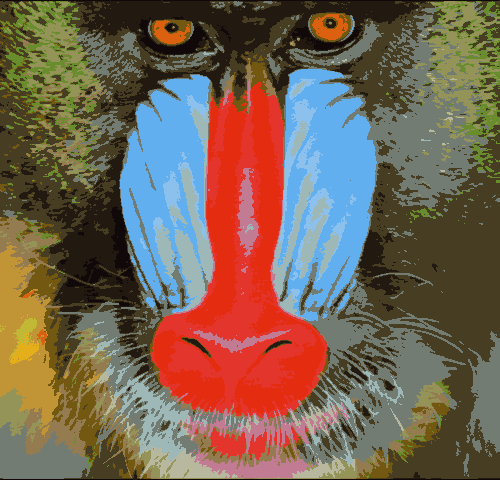}
		\caption[]{$\alpha$ from \subref{fig:mand:without3}, $\rho$ from \subref{fig:mand:nl_ll}}\label{fig:mand:ar_s}			
	\end{subfigure}
	\begin{subfigure}{0.3\textwidth}
		\centering
		\includegraphics[width = .98\textwidth]{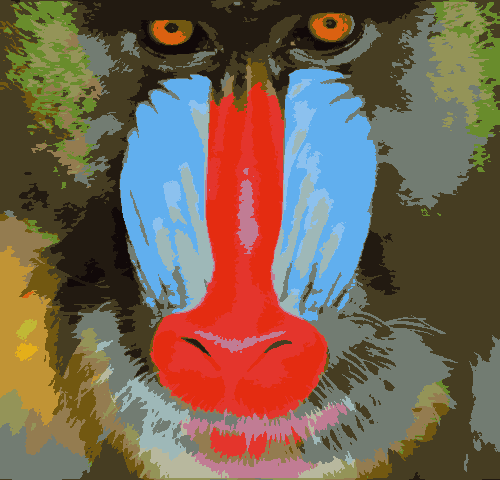}
		\caption[]{$\alpha$ from \subref{fig:mand:without5}, $\rho$ from \subref{fig:mand:nl_l}}\label{fig:mand:ar_m}			
	\end{subfigure}
	\begin{subfigure}{0.3\textwidth}
		\centering
		\includegraphics[width = .98\textwidth]{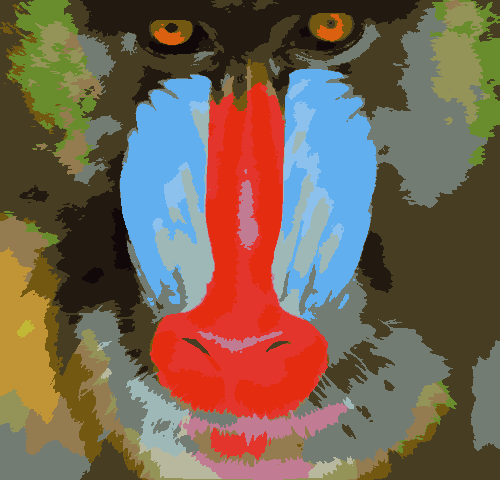}
		\caption[]{$\alpha$ from \subref{fig:mand:without7}, $\rho$ from \subref{fig:mand:nl_ml}}\label{fig:mand:ar_l}			
	\end{subfigure}
	\caption[]{Illustration of the influence of the chosen neighborhood in using Algorithm 1 for labeling the \lstinline!mandrill! with 20 different labels  
	and . 
	}\label{fig:mand}
\end{figure}

In our \emph{first example} we label 
the \lstinline!mandrill! image $f\colon\mathcal{G}\rightarrow [0,1]^3 \subset \RR^3$ in ~Fig.~\ref{fig:mand:orig},
which was also handled in~\cite{APSS2016}.
We use the 20 color labels shown in Fig.~\ref{fig:mand:color} as prior features which were picked manually from the original image. 
We mention that the numerical results with Step 1 as in \eqref{step_1_ch}, i.e., with the
additional factor $A_i$ proposed in~\cite{APSS2016} differ only in 4 percent of the image pixels
from those presented below so that there is no visual difference.

First, we are interested in the influence of different weights $\alpha,\rho$, namely
\begin{enumerate}[label={\normalfont{\roman*)}}]
 \item \emph{uniform local} weights $\rho_{i,k} \coloneqq 1/s^2$, where $s^2 \coloneqq |{\mathcal N} (i)|$ is the size of the local $s \times s$ pixel neighborhood.
 \item \emph{nonlocal} weights which are constructed as follows:
a similarity measure for two pixels $i,j\in\mathcal{G}$ is given by the weighted difference of the surrounding $s_{\mathrm{p}} \times s_{\mathrm{p}}$ patches
\begin{equation}
\mathrm{d}_{\mathrm{p}}(i,j) \coloneqq  \sum_{l_1,l_2=-s_{\mathrm{p}}}^{s_{\mathrm{p}}} g_{\sigma_{\mathrm{p}}}(l_1,l_2) \distm(f_{i+(l_1,l_2)^\tT},f_{j+(l_1,l_2)^\tT}),
\end{equation}
where $g_{\sigma_{\mathrm{p}}}\colon\RR^2\rightarrow\RR$ 
is a Gaussian with standard deviation $\sigma_{\mathrm{p}}$. 
The neighborhood $\tilde{\mathcal{N}}(i)$ is then given by the $s_\mathrm{nl}$ pixels with smallest distances $\mathrm{d}_{\mathrm{p}}$ to $i$. 
Then we make the neighborhood symmetric by setting
\begin{equation}
	\mathcal{N}_\mathrm{nl}(i) \coloneqq \tilde{\mathcal{N}}(i)\cup\{j\in\mathcal{G}\vert i\in\tilde{\mathcal{N}}(j)\}.
\end{equation}
To reduce the computational effort for computing the weights we just compare each pixel with the pixels inside a $t\times t$ neighborhood.
Finally the weights $\rho_{i,j}$ are defined as 
\begin{align}
\rho_{i,j} \coloneqq \frac{\tilde{\rho}_{i,j}}{\sum_{j\in\mathcal{N}_\mathrm{nl}(i)}\tilde{\rho}_{i,j}}, \quad 
\tilde{\rho}_{i,j} \coloneqq \exp\biggl(-\frac{\mathrm{d}_{\mathrm{p}}(i,j)}{2\sigma^2_{\mathrm{w}}}\biggr).
\end{align}
\end{enumerate}
We mirror the image at the boundary. 

Fig.~\ref{fig:mand} shows the labeling with Algorithm~\ref{alg:mult}, more precisely:
\begin{itemize}
\item \emph{$\alpha$ local, $\rho$ local}:
In Figs.~\ref{fig:mand:without3}--\subref{fig:mand:without7} we used uniform local weights 
both for $\alpha$ and $\rho$ of different sizes.
We see that large neighborhoods  lead to coarse label assignments.
The number of iterations needed to reach the stopping criterion also increases with the size of the neighborhood from 27 to 39.
\item \emph{$\alpha$ nonlocal, $\rho$ nonlocal}:
Using nonlocal neighborhoods 
with parameters $s_{\mathrm{p}} = 3,\ s_{\mathrm{nl}} \in \{7,19,37\},\ t = 11,\ \sigma_{\mathrm{p}} = 1,\ \sigma_{\mathrm{w}} = 0.2$ both for $\alpha$ and $\rho$
results  in Figs.~\ref{fig:mand:nl_ll}--\subref{fig:mand:nl_ml}. 
Even though we take a similar number of neighbors in each column of Fig.~\ref{fig:mand}, we see 
that the nonlocal approach preserves the most details of the original image, such as the pupils or the whiskers. 
\item \emph{$\alpha$ local, $\rho$ nonlocal}:
In  Figs.~\ref{fig:mand:ar_s}--\subref{fig:mand:ar_l}, we look at the influence of choosing different $\alpha$ and $\rho$. 
Here we choose $\alpha$ as uniform local weights and $\rho$ as nonlocal weights from the respective column of Fig.~\ref{fig:mand}. 
The results are a mixture of the other two label assignments.
On the one hand we obtain larger connected regions as the purely nonlocal approach on the other hand we keep finer details such the pupils and 
nostrils in comparison to the local approach. 
\item \emph{$\alpha$ nonlocal, $\rho$ local}:
Switching $\alpha$ and $\rho$ leads to results very similar 
to those shown in Figs.~\ref{fig:mand:without3}--\subref{fig:mand:without7}.
\end{itemize}

\begin{figure}[tp]
	\centering	
	\begin{subfigure}[t]{0.32\textwidth}
		\centering
		\includegraphics{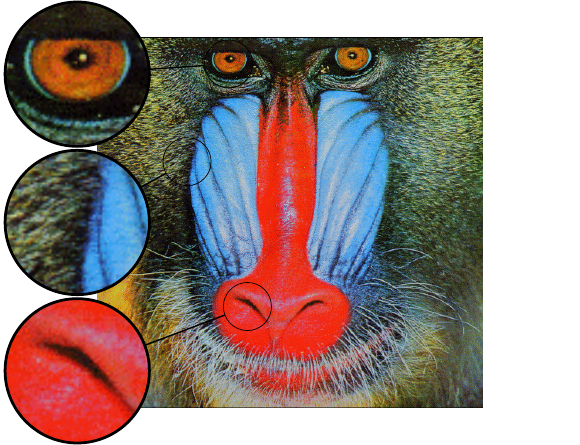}
		\caption[]{Original image \lstinline!mandrill!}\label{fig:comp:orig}
	\end{subfigure}
  \\
	\begin{subfigure}[t]{0.32\textwidth}
		\centering
		\includegraphics{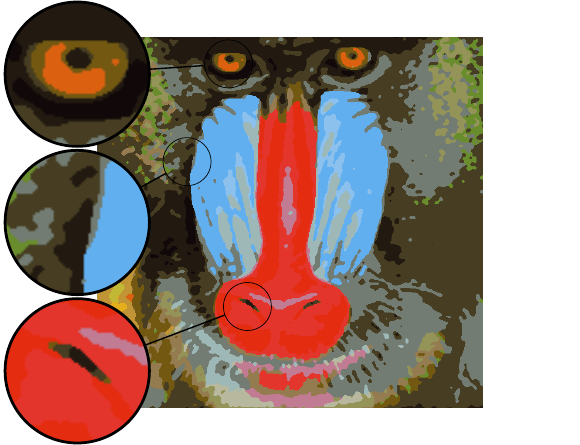}
		\caption[]{Our method with local weights,
    \(s=3\), cf.~Fig.~\ref{fig:mand:without3}}
    \label{fig:comp:without3}
	\end{subfigure}	
	\begin{subfigure}[t]{0.32\textwidth}
		\centering
		\includegraphics{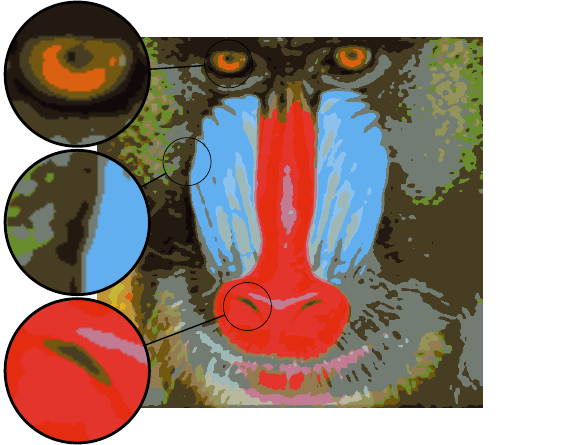}
		\caption[]{Simple Labeling, $\sigma = 2.5$}\label{fig:comp:gauss25}
	\end{subfigure}	
	\begin{subfigure}[t]{0.32\textwidth}
		\centering
		\includegraphics{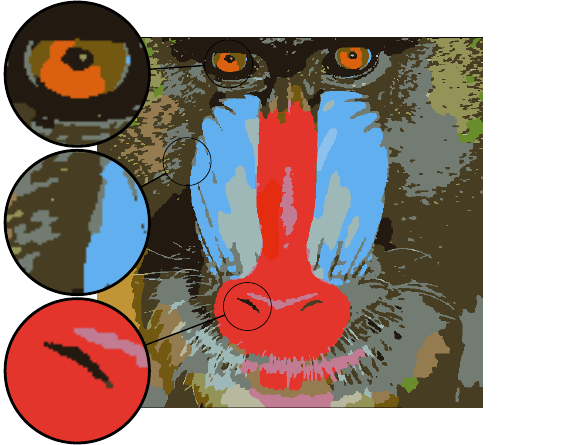}
		\caption[]{Method from~\cite{CCP2012}, $\lambda=15$}\label{fig:comp:pock}
	\end{subfigure}	
	\caption[]{Comparison of~\subref{fig:comp:without3} our Algorithm~\ref{alg:mult} with two other labeling methods: \subref{fig:comp:gauss25} a Simple Labeling and \subref{fig:comp:pock} a variational model from~\cite{CCP2012} using the labels from Fig.~\ref{fig:mand:color}.}\label{fig:comp}
\end{figure}
Next we want to compare the proposed algorithm with two other labeling methods.
The first method, called \emph{Simple Labeling} in the following, starts by computing the convolution of each of the RGB channels with a Gaussian of standard deviation \(\sigma\). The label assignment is then given by choosing the closest prior feature at each pixel, i.e., the label of pixel $i$ is given by $\argmin_k \distm(f_{\sigma,i},f^*_k)$, where $f_\sigma$ is the image $f$ convolved with the Gaussian.
Note that for more general manifold-valued images as considered in the next
examples, the Riemannian center of mass would be the natural choice for
averaging, but its computation involves a minimization process
and can not be stated explicitly.
The second method we compare with is a variational method given in~\cite{CCP2012}. Of course there exists many more methods to compare to; we chose these two, because the first one is even simpler, and the second one is a state-of-the-art variational model for image partitioning.

In Fig.~\ref{fig:comp} we compare these two methods with the multiplicative labeling derived in this paper. In order to compare the methods, we choose the parameters \(s, \sigma, \lambda\) from our approach, the Simple Labeling and the method from~\cite{CCP2012}, respectively, such that they result in similar labeling in the background, e.g., the fur in the middle right part of the image of the \lstinline!mandrill!. We further focus on three regions: the left eye, the left side of the nose and the left nostril. The original image, shown in Fig.~\ref{fig:comp:orig}, is labeled using the prior features from the last example, cf.~Fig.~\ref{fig:mand:color}.

When we compare our method with local weights and \(s=3\), cf.~Fig.~\ref{fig:mand:without3}, whose focus regions are shown in Fig.~\ref{fig:mand:without3} to the result of the Simple Labeling, cf.~Fig.~\ref{fig:comp:gauss25}, \(\sigma=2.5\), the details are kept sharper in our method, while they get blurred in the latter approach. This can clearly be seen at the nostril or the nose detail, where at least one additional label is introduced at the boundary, and the pupil is kept in its original size in our approach. However, in e.g.~more homogeneous regions like the red upper part of the nose or the fur in the top right, both methods result in similar label assignments.

Compared to the variational method from~\cite{CCP2012} our approach seems to keep a little more detail e.g. at the boundary of the nose or the fur in the bottom right,
while just loosing a little bit of detail, like the white spot in the eye. Despite of these two minor differences they perform similar.
\begin{figure}
	\centering
	\begin{subfigure}{0.4\textwidth}
		\centering
		\includegraphics[width = 0.98\textwidth]{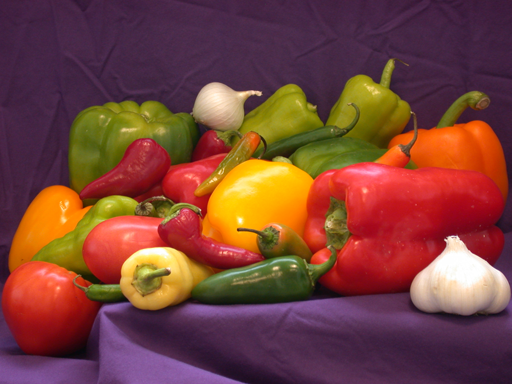}
		\caption[]{Original image \lstinline!peppers!}\label{fig:pepper:orig}			
	\end{subfigure}
	\begin{subfigure}{0.4\textwidth}
		\centering
		\includegraphics[width = 0.98\textwidth]{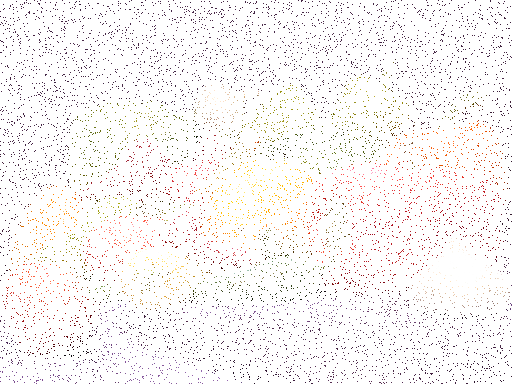}
		\caption[]{Lossy image, 95\% of missing pixels}\label{fig:pepper:lossy}			
	\end{subfigure}
	
	\begin{subfigure}{0.4\textwidth}
		\centering
		\includegraphics[width = 0.98\textwidth]{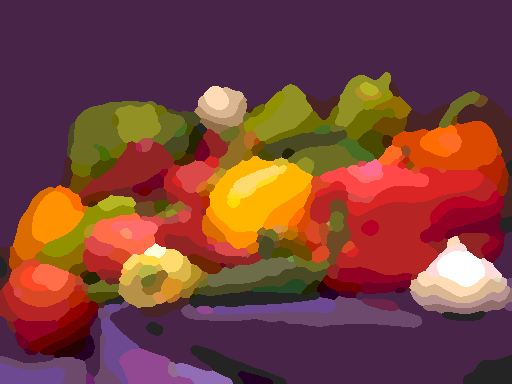}
		\caption[]{Labeling of the original image}\label{fig:pepper:lab}			
	\end{subfigure}
	\begin{subfigure}{0.4\textwidth}
		\centering
		\includegraphics[width = 0.98\textwidth]{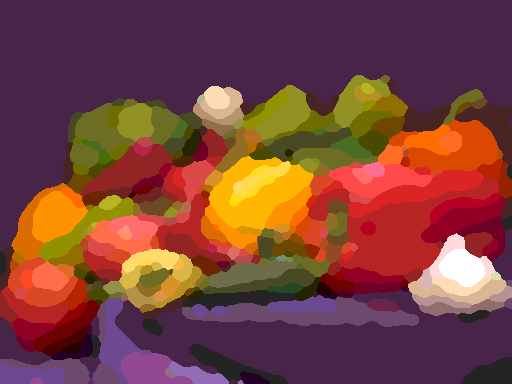}
		\caption[]{Labeling of the lossy image}\label{fig:pepper:inp}			
	\end{subfigure}
	\caption[]{Comparison between labeling the original image~\subref{fig:pepper:orig} and the lossy one \subref{fig:pepper:lossy}
	for uniform local weights with $s = 5$.}
\end{figure}

In our \emph{second example} we apply the proposed Algorithm~\ref{alg:mult} using local weights as in i) 
in order to label  the \lstinline!pepper! image, see Fig.~\ref{fig:pepper:orig}. At this point we assume that we are only 
given data at grid points $\mathcal{V}\subset\mathcal{G}$, $\lvert\mathcal{V}\rvert\ll\lvert\mathcal{G}\rvert$,
where 95\% of the pixel values are unknown, cf. Fig.~\ref{fig:pepper:lossy}. 
The prior features are chosen by a discretization of the color cube, more precisely we consider, 
$\bigl\{(f^*_1,f^*_2,f^*_3): f^*_i\in \{0,\tfrac{1}{7},\dots,1\},\ i\in\{1,2,3\}\bigr\}$. 
The distance matrix ${\mathcal D} \coloneqq (d_{i,k})_{i=1,k=1}^{n,K}$ is given by
\begin{align}
d_{i,k} &\coloneqq\begin{cases}
\distm(f_i,f_k^*)\quad & i\in\mathcal{V},\\
0\quad & i\notin\mathcal{V}.
\end{cases}
\end{align} 
Fig.~\ref{fig:pepper:inp} shows the resulting labeling. Comparing this with the labeling of the original image shown in Fig.~\ref{fig:pepper:lab}
we see that our approach can cope with missing data very well.

\paragraph{Symmetric Positive Definite Matrices.}
We want to label images having values on 
the manifold of symmetric positive definite $r \times r$ matrices ${\cal P}(r)$ with the 
affine invariant distance, see e.g.~\cite{AFPA2006,pennec2006riemannian},
\[
	\distm_{{\mathcal P}(r)} (x_1,x_2) \coloneqq \bigr\lVert\MLog(x_1^{-\frac12} x_2 x_1^{-\frac12} )\bigl\rVert,
\]
where \( \MLog x \coloneqq -\sum_{k=1}^\infty \frac{1}{k} (I-x)^k, \rho(I-x) < 1,
\) denotes the matrix logarithm of \(x\in\mathcal P(r)\) and the norm is the Frobenius norm.

As a \emph{first example} we look at diffusion tensors occurring in magnetic resonance imaging (DT-MRI). 
For DT-MRI a 3D dataset from MRI is used to compute diffusion tensors at each measurement position resulting in a
dataset, where each data point is a symmetric positive definite matrix. 
Hence all data items are given on the manifold \(\mathcal P(3)\) of symmetric positive definite \(3\times 3\) matrices. 
The Camino project\footnote{see \url{http://camino.cs.ucl.ac.uk}}~\cite{camino} provides a freely available dataset of DT-MRI of the human head. 
We take a look at the traversal cut at \(z=35\) in the dataset, i.e., our data is a manifold-valued image of size \(N\times M\) with \(N=M=112\)
where the pixels outside the head are missing, see Fig.~\ref{subfig:DT-MRI:ori}. 
The coloring is based on the anisotropy index, cf.~\cite{MoBa06}, employing the Matlab colormap \lstinline!hsv!.

We ignore the outer pixels in the following by setting their distance (and hence influence) to any other pixel to zero.
We define two prior features, which are depicted as pink ellipses, one denoting the inner, less diffusion affected area, one to capture the boundary and main diffusion areas of the brain. 
We compare three approaches, i) an equally weighted neighborhood of size \(s\times s\), ii) a weighted \(3\times 3\) using a truncated Gaussian, \(\sigma=0.5\) 
and iii) the nonlocal weights. We set \(s_{\mathrm{p}} = 3\), \(s_{\mathrm{nl}} = 9\), \(t=15\),
and for the two involved Gaussians \(\sigma_{\mathrm{p}} = 3\) and \(\sigma_{\mathrm{w}} = \frac{1}{5}\). 
The results are shown in Figs.~\ref{fig:DT-MRI}\,\subref{subfig:DT-MRI:eq}--\subref{subfig:DT-MRI:nl}:
while the equally weighted neighborhood captures the boundary of the brain correctly despite a small part at the top right, it is not able to “follow” the
diffusivity paths inside the brain. Using the weighted neighborhood captures the main features while loosing a little detail on the top right border. 
The nonlocal approach captures the complete boundary and even more details of diffusion paths inside the brain.
\begin{figure}[!tbp]\centering
	\begin{subfigure}[t]{.32\textwidth}
		\centering
		\includegraphics[width=.95\textwidth]{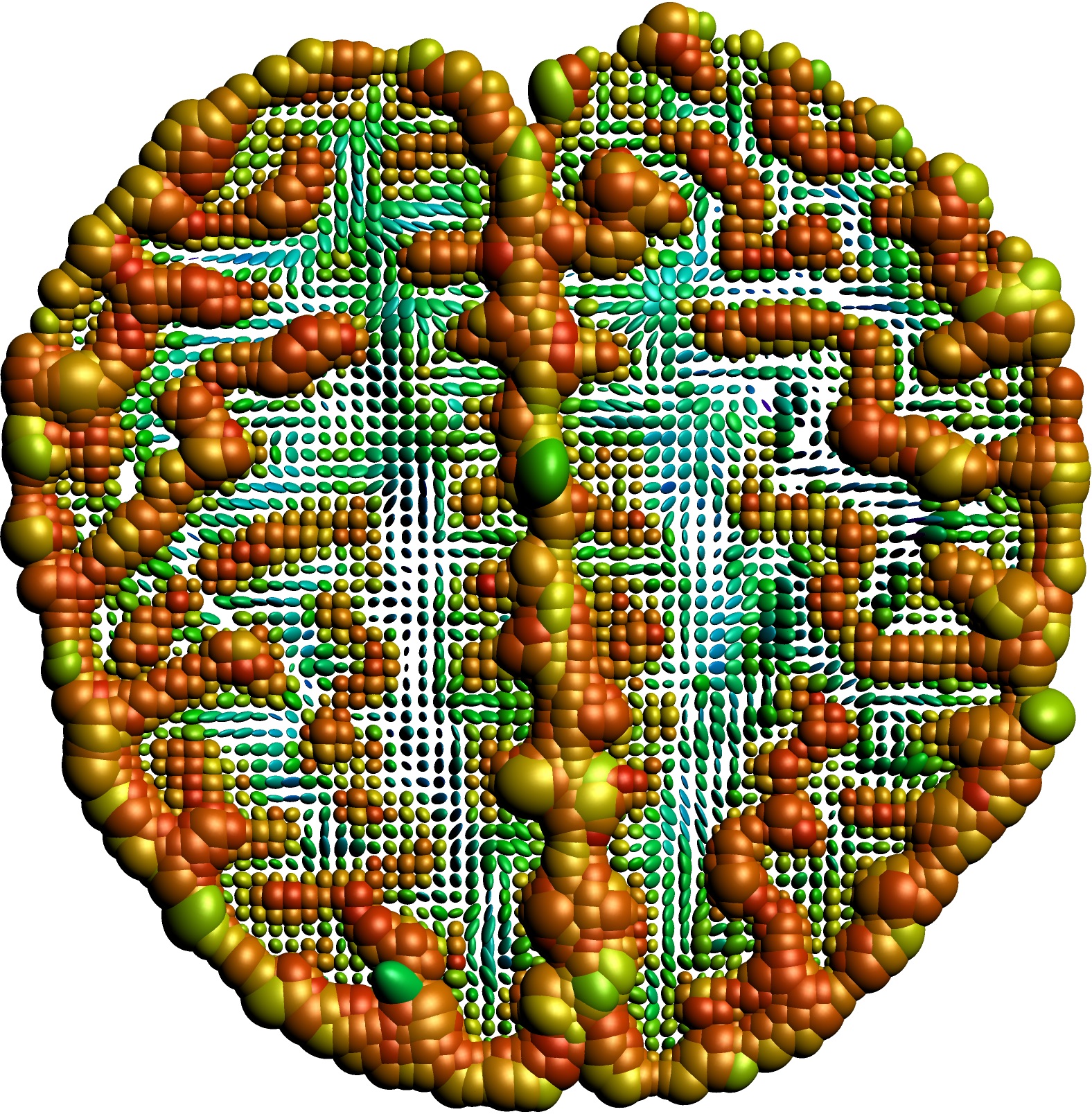}
		\caption{Original DT-MR image}\label{subfig:DT-MRI:ori}
	\end{subfigure}
	\\
	\begin{subfigure}[t]{.32\textwidth}
		\centering
		\includegraphics[width=.95\textwidth]{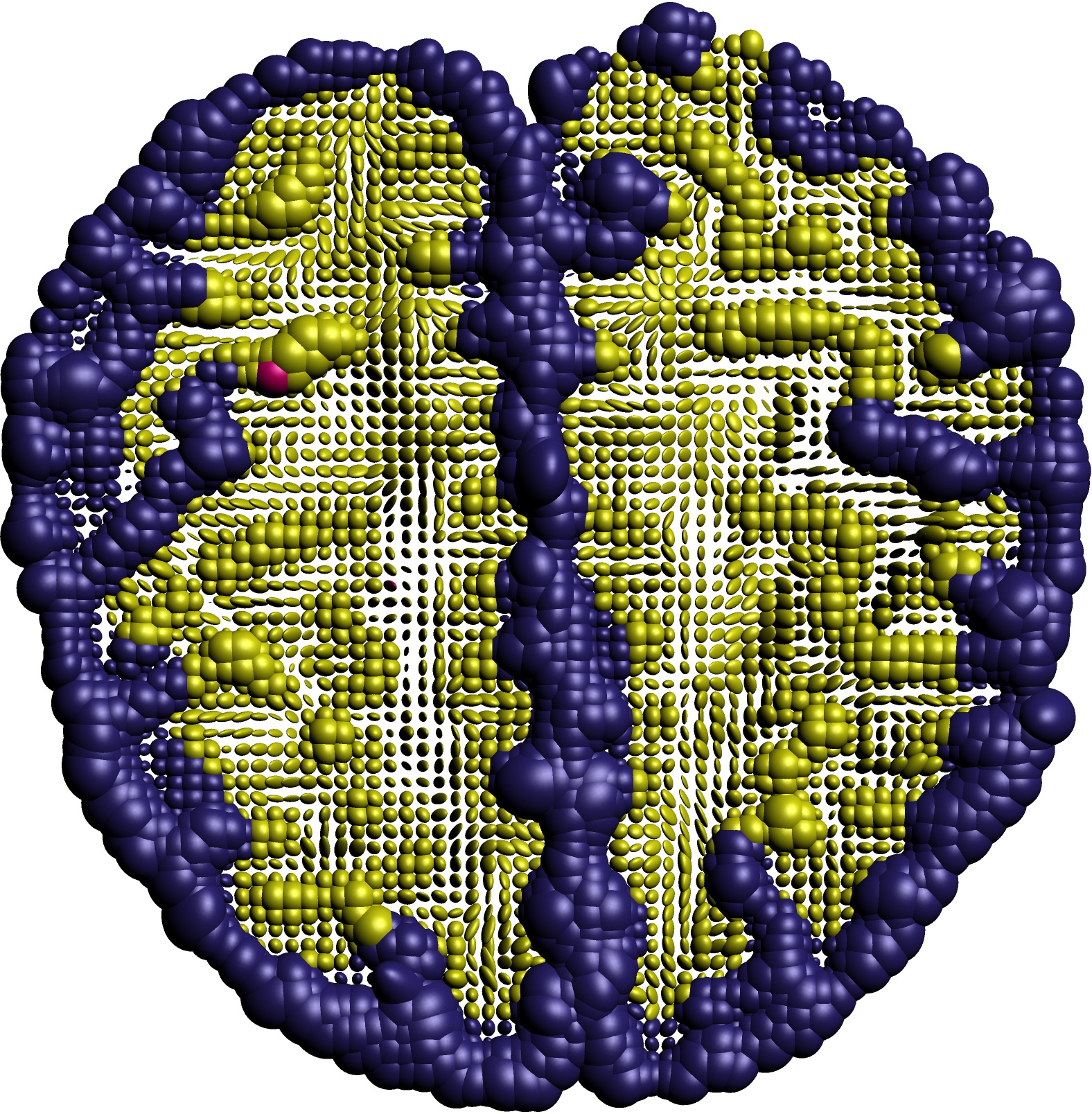}
		\caption{Equal weights, \(s=3\)}
		\label{subfig:DT-MRI:eq}
	\end{subfigure}
	\begin{subfigure}[t]{.32\textwidth}
		\centering
		\includegraphics[width=.95\textwidth]{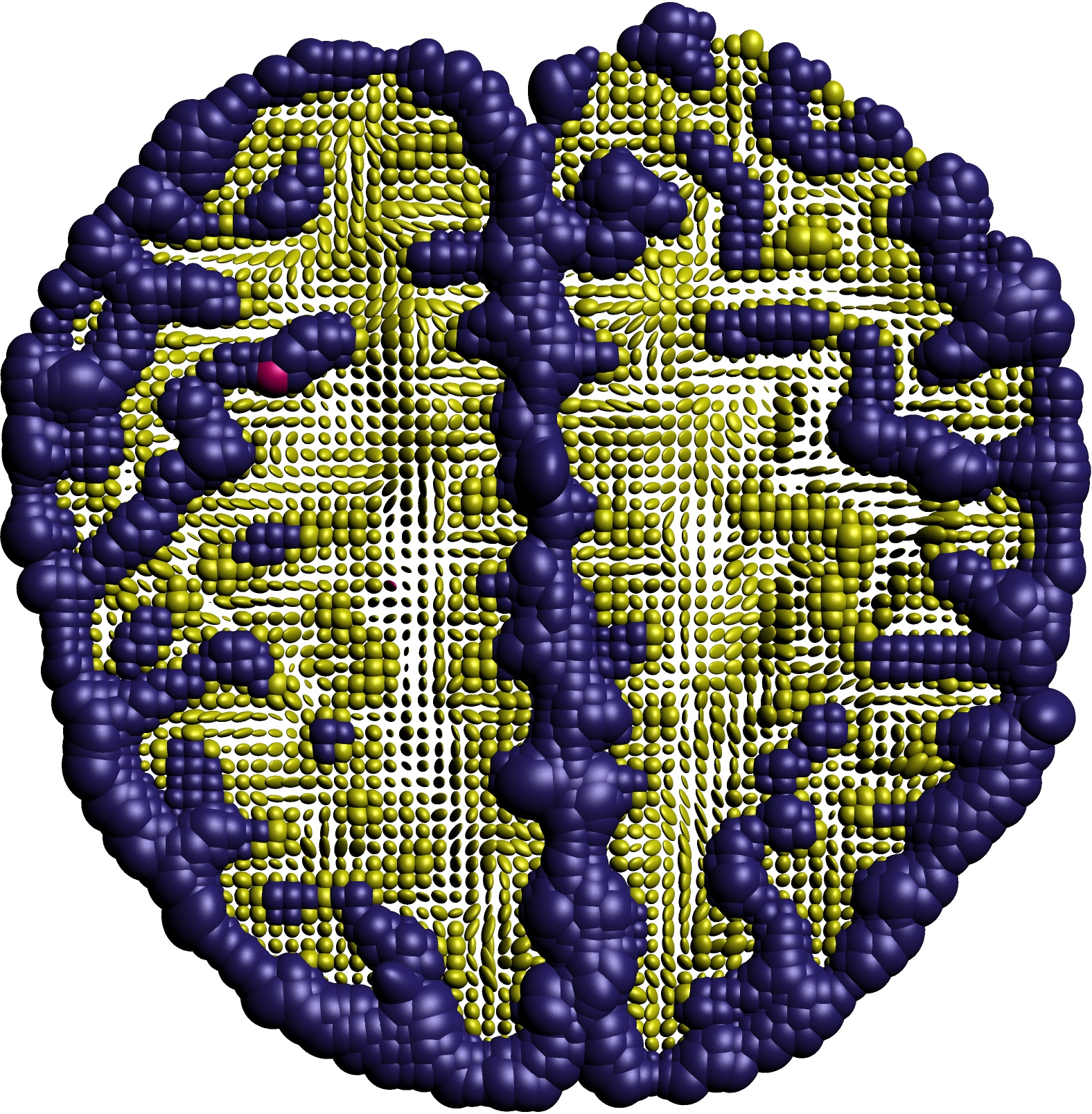}
		\caption{Gaussian weights on \(3\times 3\) neighborhood, \(\sigma=0.5\)}
		\label{subfig:DT-MRI:w}
	\end{subfigure}
	\begin{subfigure}[t]{.32\textwidth}
		\centering
		\includegraphics[width=.95\textwidth]{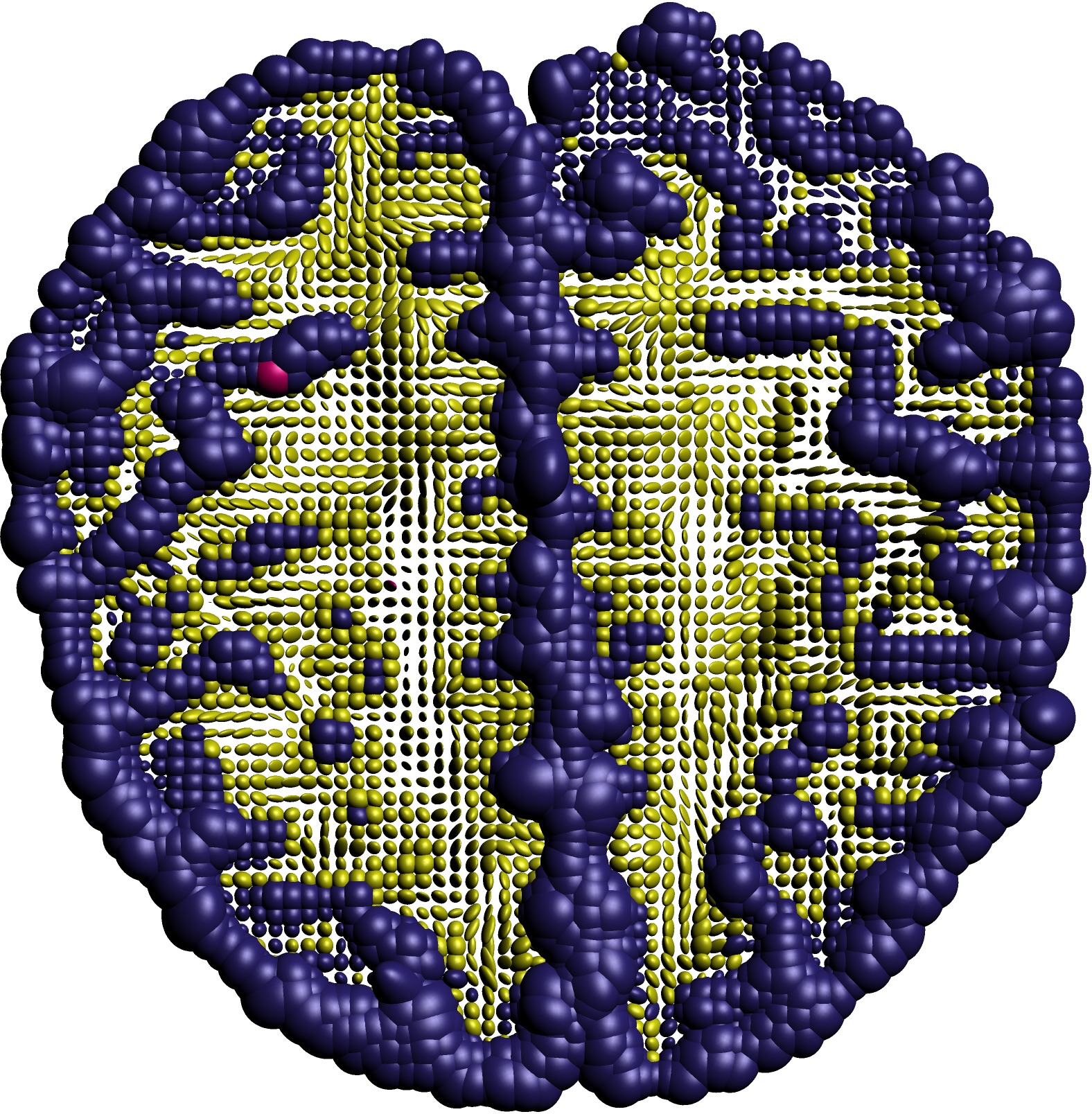}
		\caption{Nonlocal weights}
		\label{subfig:DT-MRI:nl}
	\end{subfigure}
	\caption{Segmentation of DT-MRI data with two labels and different neighborhoods.}
	\label{fig:DT-MRI}
\end{figure}
\medskip

In our \emph{second example} for symmetric positive definite matrices we have a look at the texture segmentation approach from \cite{TPM2006}. 
There the authors assign to each pixel of the image a covariance matrix, which is symmetric positive semidefinite by definition, 
and we can assume it is positive definite for a large enough number of samples in natural images. These covariance matrices are based on the following feature vectors
\begin{equation}
F(i) = \bigl(I(i),I_x(i),I_y(i),I_{xx}(i),I_{yy}(i)\bigr)^\tT,
\end{equation}
where $I$ is the intensity of the image, $I_x,I_{xx}$, and $I_y,I_{yy}$, are the first and second derivatives in horizontal, respectively vertical direction. 
To avoid the influence of noise we convolve the image with a Gaussian of small standard deviation $\sigma$ before computing the feature vectors. 
The derivatives are discretized via central differences. Given a neighborhood $\mathcal{N}_{\textrm{cov}}(i)$ around each pixel the covariance matrices are defined as 
\begin{equation}
C(i) = \frac{1}{\lvert\mathcal{N}_{\textrm{cov}}(i)\rvert-1}\sum_{j\in\mathcal{N}_{\textrm{cov}}(i)} \Bigl(F(j)-\mu(i)\Bigr)\Bigl(F(j)-\mu(i)\Bigr)^\tT,
\end{equation}
where $\mu(i)\in\RR^5$ is the mean of the features $F$ in the neighborhood $\mathcal{N}_{\textrm{cov}}(i)$. 
To achieve a good segmentation result we also include the mean $\mu$ into our final features. So we work on the product manifold $\RR^5\times\SPD(5)$ with distance
\begin{equation}
\distm_{\RR^5\times\SPD(5)}\bigl((\mu_1,C_1),(\mu_2,C_2)\bigr) 
= \sqrt{\lVert\mu_1-\mu_2\rVert_2^2+\distm_{\SPD(5)}^2(C_1,C_2)}.
\end{equation}
If we want to segment the texture image in Fig.~\ref{fig:intro:orig} we need appropriate prior features. 
They are obtained from six supervising texture images using 100 randomly selected rectangular patches 
and calculate the mean of their means and covariance matrices. The result in Fig.~\ref{fig:intro:res} 
is then obtained by using $\sigma= 0.5$ for the convolution, $7\times7$ patches as neighborhoods $\mathcal{N}_{\textrm{cov}}(i)$ and $15\times15$ 
patches as  neighborhoods $\mathcal{N}_\rho$ with uniform weights in Algorithm~\ref{alg:mult}. 
%
\paragraph{EBSD Data.}
The analysis of polycrystalline materials by means of electron backscattered diffraction
(EBSD)  is a fundamental task in material science, see~\cite{ASWK93,KuWrAdDi93}. 
Since the microscopic grain structure
affects macroscopic attributes of materials such as ductility, electrical and
lifetime properties, there is a growing interest in the grain structure of
crystalline materials such as metals and minerals. 
EBSD provides for each
position on the surface of a specimen a so-called Kikuchi pattern, which
allows the identification of 
\begin{itemize}
  \item[i)] the structure (material index), and
  \item[ii)] the orientation of the crystal relative to a fixed coordinate
system ($\operatorname{SO}(3)$ value).
\end{itemize}
Since the atomic structure of a crystal is invariant
under its specific finite symmetry group $S \subset \operatorname{SO}(3)$, 
the orientation at each grid point $i \in {\mathcal G}$ is only given as an equivalence class 
\[
	[f_i] = \{f_i s \mid s \in S\} \in \operatorname{SO}(3)/S, \quad f_i \in \operatorname{SO}(3).
\]

The following remark introduces the colorization method which is implemented in the software package MTEX~\cite{MTEX,NH16} and which we will use for visualization.

\begin{remark} 
The colorization of $\operatorname{SO}(3)/S$ valued rotations $[f]$ is done as follows: for a
fixed vector $\vec{r} \in \mathbb S^2$ we consider the mapping
$\Phi \colon \operatorname{SO}(3)/S \to \mathbb S^{2}/S$, $[f] \mapsto [f^{-1} \vec r]$ and then we assign a certain color to each point on $S^{2}/S$. 
Usually $\vec{r}$ is defined as the vector orthogonal to the specimen surface. Note that this reduces the dimension by one since we can no longer distinguish between rotations around $\vec{r}$.
After applying $\Phi$ we introduce a color coding of the sphere that takes into account the symmetries.

To illustrate the color coding, we choose quartz as an example as it has neither trivial nor too complicated symmetries.
The colorization of the quotient $\mathbb S^{2}/S$ for this example is depicted in Fig.~\ref{ebsdraw}. 
The color map shown in Fig.~\ref{ebsdraw} c) respects the symmetry group
$S \subset \operatorname{SO}(3)$ of quartz. The symmetry group has three rotations with
respect to a 3-folded axis ($2k \pi/3 $, $k=1,\ldots,3$ rotations around the main axis $c$ of the crystal) and three rotations with respect to 2-folded axis, e.g., $a_{1}$, $a_{2}$,
perpendicular to $c$.
The polyhedron drawn in Fig.~\ref{ebsdraw} a) and b) to visualize the orientation has exactly these symmetries.
\end{remark}

\begin{figure}
	\centering
	\begin{subfigure}[b]{0.3\textwidth}
        \includegraphics{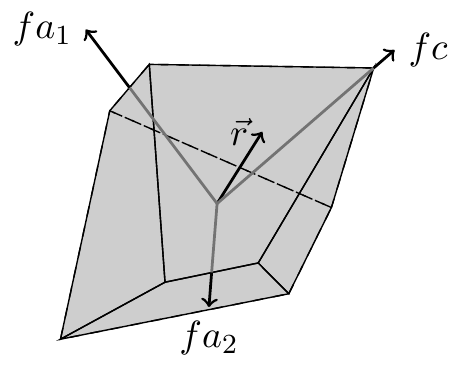}
			\subcaption{Visualization of the crystal orientation $f$}	
		\end{subfigure}
		\begin{subfigure}[b]{0.3\textwidth}
        \includegraphics{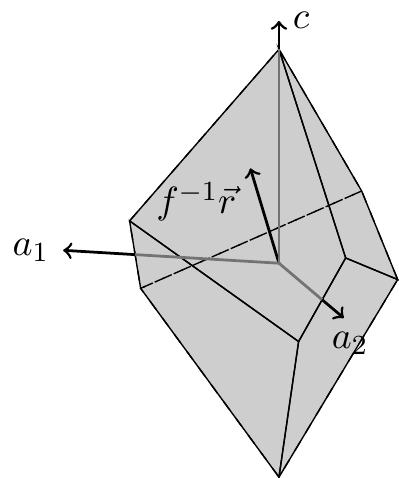}
			\subcaption{Crystal rotated back to reference system}	
		\end{subfigure}
		\begin{subfigure}[b]{0.35\textwidth}
		\centering
		\includegraphics[width=.7\textwidth]{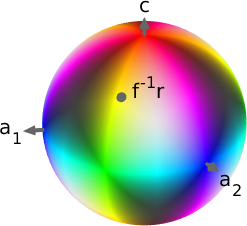}
		\subcaption{Color map for the symmetry group of quartz}
	\end{subfigure}
	\caption[]{
	 Visualization of the rotations considering the symmetry group of quartz  
	according to the mapping $\operatorname{SO}(3)/S \ni [f] \mapsto [f^{-1} r] \in \mathbb S^{2} /S$.
	}\label{ebsdraw}
\end{figure}
The grains in EBSD images are regions with similar orientations which we want to label.
As distance between orientations on $\operatorname{SO}(3)$ we use the geodesic distance 
\begin{equation}
\distm_{\operatorname{SO}(3)}(f_1,  f_2)\coloneqq \arccos\lvert\langle f_1,f_2\rangle\rvert,
\end{equation}
where we suppose that the orientations are given as quaternions, see~\cite{graef12}, i.e., $f_i\in\mathbb{S}^3/\{-1,1\}$.
To cope with the symmetry, we apply the usual distance on the quotient space $\operatorname{SO}(3)/S$, cf.~\cite[p. 153]{Fol1999}:
\begin{align}
	\distm_{\operatorname{SO}(3)/S}(f_1,f_2) \coloneqq \min_{\tilde f_1 \in [f_1]_S, \tilde f_2 \in [f_2]_S} \distm_{\operatorname{SO}(3)}(\tilde f_1, \tilde f_2).
\end{align}
In EBSD data usually pixels are missing, e.g., due to corrupted Kikuchi patterns.
Therefore we extend the approach to this setting and modify the distance as in the previous paragraph:
if a pixel is missing, the distance between this pixel and every cluster center is set to $0$.
As prior features we randomly choose orientations in the original data 
in such a way that the distance between all prior features is at least $0.3$ and 
for each pixel in the original data there exists a prior feature with distance less or equal to $0.3$.

In our \emph{first example} we label EBSD data obtained from the single phase material Magnesium, where few pixels are missing, see~\cite{MTEX}.
We focus on Algorithm~\ref{alg:mult}.
We fix $\nu=1$ and choose Gaussian weights 
$\alpha_{i,k} = \rho_{i,k}$ with standard deviation $\sigma = 0.8$ which are truncated  outside $[-3 \sigma, 3\sigma]$,
i.e., we work in a local $5\times 5$ neighborhood. 
Again, we mirror the image at the boundary.
The result is shown in Fig.~\ref{fig:ebsd_1phase}.
In contrast to the method for finding grain boundaries via thresholding described in~\cite{BHS11} 
and implemented in the software package MTEX~\cite{MTEX} 
the proposed approach yields smoother grain boundaries and is able to reduce noise.
The grain boundaries are sometimes too fringy in the original data.
This happens since the diffraction patterns are inaccurate at grain boundaries due to the change in orientation and therefore also the assigned rotations are inaccurate.
Here, the labeling method performs better.

\begin{figure}\centering
	\begin{subfigure}[b]{0.3\textwidth}\centering
	\includegraphics[height=1.2\textwidth]{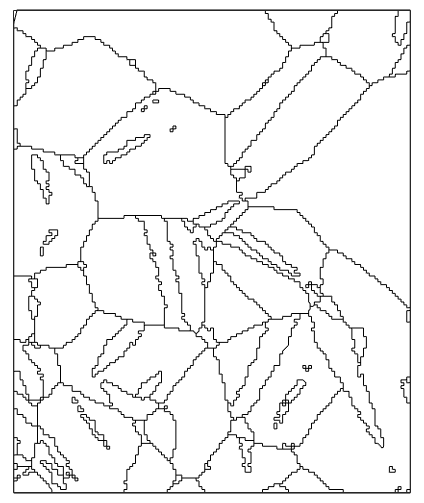}
	\caption[]{Grain boundaries computed with MTEX.}
	\end{subfigure}
	\begin{subfigure}[b]{0.3\textwidth}\centering
	\includegraphics[height=1.2\textwidth]{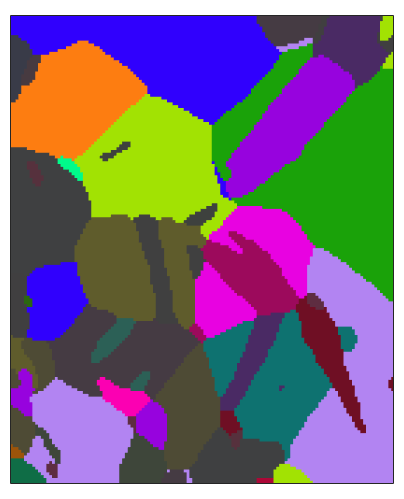}
	\caption[]{Labeling with our approach.}
	\end{subfigure}
	\caption[]{Labeling of single phase EBSD data (Magnesium) with $31$ prior features.
	$\mathcal{N}$ is chosen as the $5\times5$ neighborhood with truncated Gaussian weights ($\sigma = 0.8$). The proposed model is able to reduce noise and produces smoother boundaries.
	\label{fig:ebsd_1phase}}
\end{figure}

In the \emph{second example} we draw our attention to multiphase EBSD data.
We consider a multiphase dataset consisting of the three phases Forsterite, Enstatite and Diopside taken from MTEX~\cite{MTEX}.
These phases have different crystal symmetries such that we can not directly compute the distance between orientations belonging to different phases.
Therefore, the features have to represent not only the orientations, but also the phase they are corresponding to.
We consider features $(f_i,p_i)$ where $p_i \in \{1,2,3\}$ denotes the phase and  $f_i \in \operatorname{SO}(3)/S(p_i)$ is the orientation where $S(p_i)$ 
denotes the symmetry group of the phase $p_i$.
Then we choose a finite, but constant distance $\tau$ between pixels that belong to different phases to allow for phase changes of noisy pixels.
Now, the distance matrix ${\mathcal D} \coloneqq (d_{i,k})_{i=1k=1}^{n,K}$ has entries
\begin{align}
d_{i,k} &\coloneqq \begin{cases} 0 \quad& i\notin\mathcal{V}, \\
\distm_{\operatorname{SO}(3)/S(p_i)}(f_i,f_k^*) &  i\in\mathcal{V},\ p_i = p_k^*,\\
\tau & \text{otherwise.}
\end{cases}
\end{align}
Fig.~\ref{fig:ebsd_multiphase} shows an example of such a data set. Here we set $\tau \coloneqq 1$.
We see that the partitioning of both, phase and orientation works well even in the presence of missing (white) pixels.
In particular, also the phase boundaries get smoothed.
Furthermore, noise in the phase data, i.e., pixels that were assigned to the wrong phase in the image acquisition process, is reduced.

\begin{figure}\centering
	\begin{subfigure}[t]{0.23\textwidth}\centering
	\includegraphics[height=3.3\textwidth]{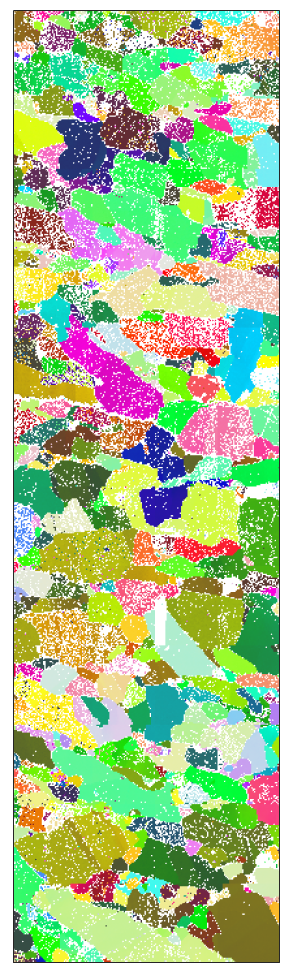}
	\caption[]{Original data.}
	\end{subfigure}
	\begin{subfigure}[t]{0.23\textwidth}\centering
	\includegraphics[height=3.3\textwidth]{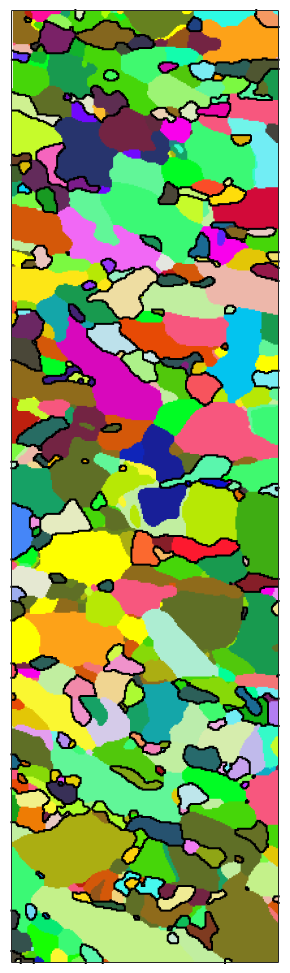}
	\caption[]{Labeled orientations.}
	\end{subfigure}
	\begin{subfigure}[t]{0.23\textwidth}\centering
	\includegraphics[height=3.3\textwidth]{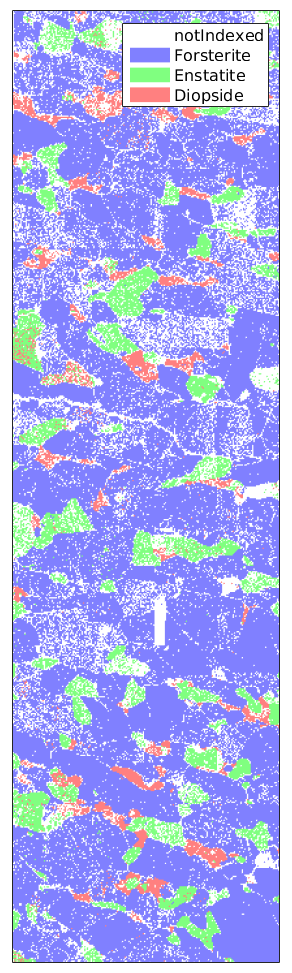}
	\caption[]{Original phases.}
	\end{subfigure}
	\begin{subfigure}[t]{0.23\textwidth}\centering
	\includegraphics[height=3.3\textwidth]{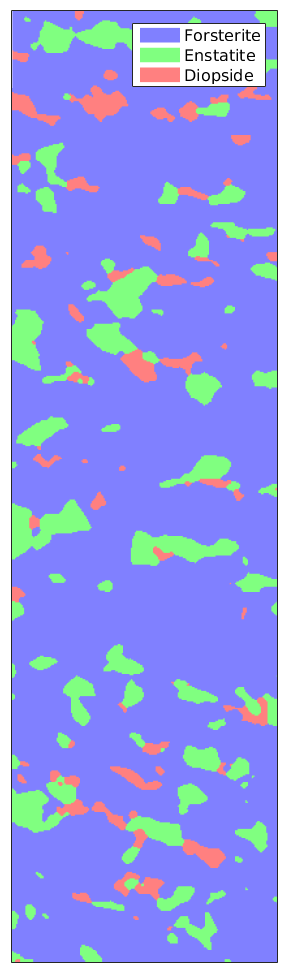}
	\caption[]{Labeled phases.}
	\end{subfigure}
	\caption[]{Labeling of corrupted three-phase EBSD data with $447$ prior orientations
	by Algorithm~\ref{alg:mult}. 
	For the original data also the MTEX colorization was applied.
	Phase transitions are emphasized by black lines in the labeled orientations.
  }	\label{fig:ebsd_multiphase}
\end{figure}
\section{Conclusions} \label{sec:conclusions}
We have proposed a simple and efficient algorithm
for data labeling based on an iterative geometric averaging 
of a label assignment matrix.
The algorithm starts with a matrix  of averaged distances between the prior features and 
those which we want to label.
In applications this distance may not be the Euclidean one, but e.g., the distance on an
appropriate manifold.
Since  the weights in the label assignment matrix are updated using only few of their local or nonlocal neighbors, 
our algorithm has a high potential for parallelization, which is not exploited in the implementation so far.
In particular, a large number of labels does not increase the convergence speed drastically, which is the case for various
TV-regularized variational methods as~\cite{BYT2011,CCP2012,CJPT11,CEN2006,LS2011}.

There are still a lot of open questions, which we want to examine in our future work, e.g.:
\begin{enumerate}[label={\roman*)}]
 \item
the convergence analysis of Algorithm~\ref{alg:mult} with Step 3, in particular the role
of $\varepsilon$. Note that we have observed convergence to the vertices of $\Delta_{K,\varepsilon}$
in all our numerical examples.
\item and to the solution of variational problems.
\end{enumerate}
Of course it would be useful to have a similar multiplicative filtering approach for non supervised labeling 
or to develop updating strategies for the prior features. Further, we intend to use the method for other
applications.

\paragraph{Acknowledgement.} 
 We are grateful to Ch. Schn\"orr (University of Heidelberg) for stimulating discussions.
Many thanks to R. Hielscher (University of Chemnitz) for supporting the work on EBSD data.
We thank the referees for requesting a discussion of condition (PI).
Funding by the German Research Foundation (DFG) within the project STE 571/13-1 \& BE 5888/2-1
and with\-in the Research Training Group 1932 “Stochastic Models for Innovations in the Engineering Sciences”, 
project area P3, is gratefully acknowledged.
Furthermore, G. Steidl acknowledges the support by the German Federal Ministry of Education and Research (BMBF) through grant 05M13UKA
(AniS).

\bibliographystyle{abbrv}
\bibliography{DR-ref}
\end{document}